\def\titlerunning#1{\gdef\titrun{#1}}
\def\author#1{\gdef\autrun{\def\and{\unskip, }#1}\gdef\@author{#1}}
\def\address#1{{\def\and {\\\hspace*{18pt}} \renewcommand{\thefootnote}{}%
\footnote {#1}}%
\markboth{\autrun}{\titrun}}
\def\email#1{e-mail: #1}
\def\subjclass#1{{\renewcommand{\thefootnote}{}%
\footnote{\emph{Mathematics Subject Classification (2010):} #1}}}
\def\keywords#1{\par\medskip
\noindent\textbf{Keywords.} #1}
\newtheorem{theo}{Theorem}[section]
\newtheorem{prop}[theo]{Proposition}
\newtheorem{lem}[theo]{Lemma}
\newtheorem{defi}[theo]{Definition}
\theoremstyle{definition}
\newtheorem{rem}[theo]{Remark}
\newtheorem{example}{Example}
\newtheorem{claim}{Claim}
\newtheorem*{cons}{Construction}
\newcommand{\NN}{\mathbf{N}}
\newcommand{\Int}{\mathrm{Int}}
\newcommand{\RR}{\mathbf{R}}
\newcommand{\ZZ}{\mathbf{Z}}
\newcommand{\cF}{\mathcal{F}}
\newcommand{\cS}{\mathcal{S}}
\newcommand{\cC}{\mathcal{C}}
\newcommand{\cX}{\mathcal{X}}
\newcommand{\cP}{\mathcal{P}}
\let\bord\partial
\let\bydef\emph
\begin{document}
\begin{spacing}{1.2}

\titlerunning{Open $3$-manifolds which are connected sums of closed ones}
\title{Open $3$-manifolds which are connected sums\\ of closed ones}

\author{Laurent Bessi\`eres \and G\'erard Besson \and Sylvain Maillot}

\maketitle

\address{L. Bessi\`eres: Institut de Math\'ematiques de Bordeaux, Universit\'e de Bordeaux, 33405 Talence, FRANCE; \email{laurent.bessieres@math.u-bordeaux.fr} 
\and 
G. Besson: Institut Fourier, Universit\'e Grenoble Alpes, 100 rue des maths, 38610 Gi\`eres, France; 
\email{g.besson@univ-grenoble-alpes.fr} 
\and 
S. Maillot: Institut Montpelli\'erain Alexander Grothendieck, CNRS, Universit\'e de Montpellier;
\email{sylvain.maillot@univ-montp2.fr}}

\subjclass{57Mxx,57N10}

\begin{abstract}
We consider open, oriented $3$-manifolds which are infinite connected sums of closed $3$-manifolds. We introduce some topological invariants for these manifolds and obtain a classification in the case where there are only finitely many summands up to diffeomorphism. This result encompasses both the Kneser-Milnor Prime Decomposition Theorem for closed $3$-manifolds and the Kerékj\'art\'o-Ri\-chards classification theorem for open surfaces.
 \keywords{Topology of $3$-manifolds}
\end{abstract}

\section{Introduction}\label{sec:intro}
In~\cite{scott:noncompact}, P.~Scott studied decompositions of open $3$-manifolds as (possibly infinite) connected sums of prime manifolds which may or may not be compact. He gave examples of open $3$-manifolds that do not have such decompositions. By contrast, in~\cite{B2M:scalar} the authors proved that open $3$-manifolds which carry certain Riemannian metrics of positive scalar curvature do admit such decompositions, and in addition all summands are compact. In order to recall the result, we introduce some terminology.

Throughout the paper, we work in the category of smooth, oriented 3-manifolds. We say that two oriented 3-manifolds $M_1,M_2$ are \bydef{isomorphic} if there exists an orientation-preserving diffeomorphism from $M_1$ to $M_2$. For convenience we say that a 3-manifold $N$ has \bydef{spherical boundary} if every component of $\bord N$ (if any) is a 2-sphere. Then there is a unique manifold, up to isomorphism,  obtained by gluing a $3$-ball to every boundary component of $N$. This operation is called \bydef{capping-off}. The capped-off manifold is denoted by $\widehat N$.

\begin{defi}
Let $\mathcal{X}$ be a class of connected oriented $3$-manifolds without boundary. An oriented $3$-manifold $M$ is said to be \bydef{decomposable over} $\mathcal{X}$ if $M$ contains a locally finite collection of pairwise disjoint embedded $2$-spheres $\{S_i\}$ such that for every connected component $C$ of $M$ split along $\{S_i\}$, the capped-off manifold $\widehat C$ is isomorphic to some member of $\mathcal{X}$ or to $S^3$.
\end{defi}

The main result of~ \cite{B2M:scalar} can be formulated as follows: if $M$ is an open $3$-manifold which admits a complete Riemannian metric of bounded geometry and uniformly positive scalar curvature, then there exists a finite familly $\cF$ of spherical manifolds such that $M$ is decomposable over $\mathcal{F} \cup\{S^2\times S^1\}$. This begs the question of whether such manifolds, and more generally
$3$-manifolds which are decomposable over some finite collection of closed $3$-manifolds, can be classified. In this paper we show that this is the case, using a set of invariants inspired by the work of Kerékj\'art\'o~\cite{Ker} and Richards~\cite{Rich} to classify open surfaces:\footnote{While the Kerékj\'art\'o-Richards theorem also applies to nonorientable surfaces, in this paper we restrict attention to orientable manifolds for simplicity.}  the space of ends of $M$  together with a colouring depending on whether a given closed prime $3$-manifold appears infinitely many times in any neighbourhood of this end or not. 

Recall that a closed $3$-manifold $K$ is \bydef{prime} if it is connected, not diffeomorphic to $S^3$, and whenever $K$ is diffeomorphic to $M_1\# M_2$, one of the $M_i$'s is diffeomorphic to $S^3$. In this paper we shall use the word `prime' only for \emph{closed} manifolds. The class of oriented prime $3$-manifolds will be denoted by $\cP$.

Let $K$ be a (possibly disconnected) compact oriented $3$-manifold with spherical boundary. For each $P\in\cP$, we denote by  $n_P(K)$ the sum over all connected components $L$ of $K$ of the number of summands isomorphic to $P$ in the Kneser-Milnor decomposition of the capped-off manifold $\widehat L$. Thus the Kneser-Milnor theorem implies: every closed oriented $3$-manifold is decomposable over some finite collection of prime oriented $3$-manifolds, and two closed, connected, oriented $3$-manifolds $K_1,K_2$ are isomorphic if and only if $n_P(K_1)=n_P(K_2)$ for every $P\in\cP$.

We now come to the definition of our invariants.
Let $P$ be an oriented prime 3-manifold. Let $U$ be an open oriented $3$-manifold.Then we define  $n_P(U)\in\NN\cup\{\infty\}$ as the supremum of the numbers $n_P(K)$ where $K$ ranges over all compact submanifolds of $U$ with spherical boundary. Let $M$ be an open connected oriented $3$-manifold. An end $e$ of $M$ has \bydef{colour} $P$ if for every neighbourhood $U$ of $e$, with compact boundary, the number $n_P(U)$ is nonzero. (Then it is actually infinite, see below.) The set of ends of $M$ of colour $P$ is denoted by $E_P(M)$. By Lemma~\ref{lem:closed} below, it is a closed subset of $E(M)$.
Note that an end may have several colours, i.e. $E_P(M)\cap E_{P'}(M)$ can be nonempty for $P,P'\in\cP$. In fact, the closed subsets $E_P(M)$ can be quite arbitrary by our realisation theorem, Theorem~\ref{theo:construction} below.

Let $M,M'$ be open connected oriented $3$-manifolds. A map $\phi:E(M) \to E(M')$ is called \bydef{colour-preserving} if for every $P\in\cP$, we have $\phi(E_P(M))=E_P(M')$.

\begin{theo}[Main theorem]\label{theo:main}
Let $\mathcal{X}$ be a finite collection of prime oriented $3$-manifolds. Let  $M,M'$ be open connected oriented $3$-manifolds which are decomposable over $\mathcal{X}$. Then $M$ is isomorphic to $M'$ if and only if the following conditions hold:
  \begin{enumerate}[\hspace{10pt}(1)]
  \item For every $P\in\cP$, the numbers $n_P(M)$ and $n_P(M')$ are equal.
    \item There is a colour-preserving homeomorphism $\phi : E(M) \to E(M')$.
  \end{enumerate}
\end{theo}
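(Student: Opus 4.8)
The plan is to prove necessity first, which is easier, and then concentrate on sufficiency. For necessity, suppose $f : M \to M'$ is an orientation-preserving diffeomorphism. Condition (1) is immediate: any compact submanifold $K \subset M$ with spherical boundary has image $f(K) \subset M'$ of the same type with $n_P(f(K)) = n_P(K)$, and vice versa, so the suprema agree. For condition (2), recall that a diffeomorphism induces a homeomorphism $\phi = f_* : E(M) \to E(M')$ on the spaces of ends (ends are intrinsic to the topology at infinity). That $\phi$ is colour-preserving follows because if $e$ has colour $P$, then every neighbourhood $U$ of $e$ has $n_P(U) \neq 0$, and $f(U)$ is a neighbourhood of $\phi(e)$ with $n_P(f(U)) = n_P(U) \neq 0$; since neighbourhoods of $\phi(e)$ are cofinal among images of neighbourhoods of $e$, we get $\phi(e) \in E_P(M')$, and the reverse inclusion is symmetric.

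The substance is sufficiency. The strategy is to build the isomorphism $M \to M'$ by an exhaustion-and-back-and-forth argument, mimicking the Kerékj\'art\'o--Richards proof but carrying the $3$-dimensional prime-summand bookkeeping along. First I would fix finite collections of disjoint embedded $2$-spheres witnessing decomposability of $M$ and $M'$ over $\mathcal X$, and use them to produce exhaustions $M = \bigcup_i M_i$ and $M' = \bigcup_i M'_i$ by compact connected submanifolds with spherical boundary, arranged so that each $M_{i+1} \setminus \Int M_i$ (and similarly for $M'$) is a disjoint union of ``elementary'' pieces: products $S^2 \times [0,1]$ or compact pieces carrying exactly one prime summand from $\mathcal X$ together with some trivial $S^3$-summands. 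The homeomorphism $\phi$ of end spaces, together with the equality of the global invariants $n_P$, lets one match up, at each finite stage, the frontier spheres of $M_i$ with those of some $M'_{j}$ and the ``colour content'' of each complementary region; here one uses that $E(M)$ and $E(M')$ are compact, totally disconnected, metrisable spaces (they are closed subsets of Cantor-type spaces), so a clopen partition of $E(M)$ refining any given one exists and can be transported by $\phi$. One then glues the stagewise identifications: each frontier $2$-sphere bounds a $3$-ball on the ``outside'' after capping, and isotopy uniqueness of embedded $2$-spheres bounding balls (plus the fact that an orientation-preserving self-diffeomorphism of $S^2$ is isotopic to the identity, and Cerf's theorem $\pi_0\diff(S^3) = 0$) guarantees the partial diffeomorphisms on consecutive shells can be matched along their common boundary spheres to yield a global diffeomorphism.

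The key combinatorial step is an accounting lemma: given that $\phi$ is colour-preserving and $n_P(M) = n_P(M')$ for all $P$, one can choose the exhaustions so that at each stage $n_P(M_i) = n_P(M'_i)$ for every $P \in \mathcal X$ \emph{and} the induced clopen decomposition of the end space matches under $\phi$. The point is that a prime $P$ contributes infinitely to $M$ (i.e.\ $n_P(M) = \infty$) in one of two ways: either ``along an end'' (some end has colour $P$), or not; Lemma~\ref{lem:closed} and the remark that $n_P(U) \neq 0$ for a neighbourhood of a colour-$P$ end forces it to be infinite are exactly what is needed to reconcile the finite-stage counts with the colouring data. When $n_P(M)$ is finite, all copies of $P$ lie in some compact core and one simply absorbs them into $M_i$ for $i$ large; when $n_P(M) = \infty$, the copies must escape to infinity and one distributes them among the shells so as to realise precisely the prescribed colour on each clopen piece of $E(M)$ — and $\phi$ being colour-preserving says the target distribution is compatible.

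The main obstacle I expect is this matching-at-each-stage step: making the exhaustions ``synchronised'' so that finitely many frontier spheres of $M_i$ biject with finitely many frontier spheres of some $M'_{n(i)}$ while simultaneously the prime content of each complementary shell agrees and the clopen end-partition is respected by $\phi$. This requires interleaving the two exhaustions (passing to subsequences on both sides alternately, the back-and-forth), and carefully using that any two compact connected submanifolds with spherical boundary containing a given one are related by isotopy after adding trivial $S^2 \times [0,1]$ collars and $S^3$-summands — essentially a relative, parametrised version of Kneser--Milnor uniqueness. Once this bookkeeping is set up, assembling the diffeomorphism is routine: glue the shell-by-shell diffeomorphisms, using at each gluing sphere that orientation-preserving diffeomorphisms of $S^2$ form a connected group so the pieces fit smoothly.
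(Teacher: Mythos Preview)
Your proposal is correct and follows essentially the same route as the paper: a back-and-forth construction of synchronised spherical exhaustions $(A_n)$, $(A'_n)$ together with compatible isomorphisms $\Phi_n:A_n\to A'_n$, using Kneser--Milnor uniqueness on the capped-off shells to match prime content and the connectedness of $\diff^+(S^2)$ to glue. The paper's implementation differs only in packaging: rather than aiming directly for $n_P(M_i)=n_P(M'_i)$, it introduces \emph{good} exhaustions (each complementary component unbounded, with connected boundary, and $n_k(U)\in\{0,\infty\}$), fixes $A'_{n+1}$ from such an exhaustion of $M'$, enlarges on the $M$ side until $n_k(C_{m,S})\ge n_k(C'_S)$ for every shell, and then absorbs the supernumerary prime summands of $\widehat C_{m,S}$ toward complementary components whose ends carry the relevant colour---exactly the ``distribute them among the shells'' step you anticipate as the main obstacle.
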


\begin{rem}
\begin{enumerate}
\item By Thereom \ref{thm:tree} and Proposition~\ref{prop:invariants} below, it suffices in Condition~(1)  to assume this for all $P\in \cX \cup \{S^2\times S^1\}$, since all the other numbers vanish. 
\item Example~\ref{example:one} below shows that the finiteness assumption on $\cX$ is necessary.
\item In the case where $n_P(M)<\infty$ for all $P \in \cP$, the manifold is determined, up to isomorphism,  by $E(M)$ and the numbers $n_P(M)$. 
\end{enumerate}
\end{rem}

To conclude this introduction, let us mention that the notion of infinite connected sum is also useful in large dimensions, see e.g.~\cite{whyte}.
 
The paper is organized as follows: in Section~\ref{sec:connected sums}, we recall, in a slight more precise form, the notion of connected sum of closed manifolds along a possibly infinite graph, discussed in~ \cite{B2M:scalar, B2M2:space_of_metrics}, and explain the relationship between this notion and decomposability. We state Theorem~\ref{thm:tree}, which allows to turn the graph into a tree, possibly at the expense of adding $S^2\times S^1$ summands. We also state Proposition~\ref{prop:invariants} and Theorem~\ref{theo:construction}, and give Example~\ref{example:one} mentioned above. In Section~\ref{sec:preliminaries}, we discuss the properties of special exhaustions of decomposable $3$-manifolds called \emph{spherical exhaustions}, which play an important role in all the subsequent proofs. Proposition~\ref{prop:invariants} and Theorem~\ref{theo:construction} are proved there. Section~\ref{sec:proof of 2.3} is devoted to the proof of Theorem~\ref{thm:tree}. Section~\ref{sec:proof of main} is dedicated to the proof of Theorem~\ref{theo:main}. Some concluding remarks are gathered in Section~\ref{sec:conclusion}. We finish with an appendix reviewing the theory of spaces of ends.\\

Acknowledgment: this work began during ANR project GTO ANR-12-BS01-0004 and completed during ANR project CCEM ANR-17-CE40-0034.

\section{Connected sums along graphs}\label{sec:connected sums}
In this section we reformulate our notion of decomposable manifolds in terms of performing connected sums of possibly infinitely many closed oriented manifolds along some locally finite graph. This was discussed in previous articles of the authors and F.~C.~Marques~\cite{B2M:scalar, B2M2:space_of_metrics} (cf.~\cite{scott:noncompact}.) 
Here though, we need to be more precise regarding orientations, and fix some notation for future use.

Let $I$ be a subset of $\NN$ and $\mathcal X=\{X_k\}_{k\in I}$ be a family of closed connected oriented $3$-manifolds. By convention we assume henceforth that $0\in I$ and $X_0=S^3$. A \bydef{coloured graph} is a pair $(G,f)$ where $G$ is a locally finite connected graph, possibly with loops, with vertex set $V(G)$ and $f$ is a map from $V(G)$ to $I$.

\begin{cons}
For each vertex $v\in V(G)$, let $Y_v$ be a copy of $X_{f(v)}$ with $d(v)$ disjoint open 3-balls removed, where $d(v)$ is the degree of $v$. Thus $Y_v$ is a compact manifold with spherical boundary. Let $Y$ be the disjoint union of all $Y_v$'s. Then glue the $Y_v$'s to each other along the edges of $G$, using an orientation-reversing diffeomorphism. We denote by $M(G,f)$ the resulting oriented $3$-manifold; it is well-defined up to isomorphism (see \cite{B2M2:space_of_metrics}, Section 2.1). For each edge $a$ of $G$ there is an embedded $2$-sphere $S_a$ in $M(G, f)$ along which the gluing has been done.
\end{cons}

\begin{defi}
Let $\cX$, $G$ and $f$ be as above. The oriented manifold $M(G,f)$ is called the \bydef{manifold obtained by connected sum along} the coloured graph $(G,f)$.
\end{defi}

\begin{rem}\label{rem:M(G,f)}
\begin{enumerate}
\item The manifold $M(G,f)$ is  connected and without boundary. It is closed if and only if $G$ is finite.
\item If there is a loop in $G$ at the vertex $v$, then the natural map from $Y_v$ to $M(G,f)$ is not injective. If there is none, then $Y_v$ can be identified with its image in $M$.
\item The family $\{S_a\}$ is a locally finite family of embedded, pairwise disjoint $2$-spheres in $M(G,f)$ whose dual graph is isomorphic to $G$. It follows that $M(G,f)$ is decomposable over $\cX$.
Conversely, if $M$ is decomposable over $\cX$, then $M$ is isomorphic to the connected sum along the dual graph $G$ of the splitting collection of $2$-spheres with appropriate colouring. 
\end{enumerate}
\end{rem}

In the compact case this definition is slightly nonstandard in the sense that in the usual definition the graph is a \emph{tree}. It is well-known, however, that a connected sum along a finite graph 
 can be made into a connected sum along a tree at the expense of adding vertices with $S^2\times S^1$ factors. This is also true in the infinite case:
\begin{theo}\label{thm:tree}
Let $\cX=\{X_k\}$ be a (possibly infinite) family of closed connected oriented manifolds with $X_0=S^3$. Let $M$ be a connected open oriented $3$-manifold. If $M$ is decomposable over $\mathcal X$, then it is isomorphic 
to a connected sum along a coloured tree of members of $\cX \cup \{S^2 \times S^1\}$. 
\end{theo}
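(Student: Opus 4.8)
The plan is to reduce Theorem~\ref{thm:tree} to a purely combinatorial operation on the coloured graph $(G,f)$ with $M\cong M(G,f)$, whose existence is guaranteed by Remark~\ref{rem:M(G,f)}(3): since $M$ is decomposable over $\cX$, it is isomorphic to a connected sum along the dual graph $G$ of the splitting spheres, with the induced colouring. We want to modify $(G,f)$ into a coloured \emph{tree} $(T,f')$ with colours in $\cX\cup\{S^2\times S^1\}$ such that $M(T,f')\cong M(G,f)$. The key local move is: if $a$ is an edge of $G$ whose removal does \emph{not} disconnect $G$ (equivalently, $a$ lies on a cycle, including the case of a loop), then cutting $M(G,f)$ along $S_a$ and capping off replaces $G$ by the graph $G'$ obtained by deleting $a$ and adding a new vertex $w$ of degree~$1$, coloured $S^2\times S^1$, attached by a new edge to one of the endpoints of $a$ — together with the symmetric modification at the other endpoint, so in fact one deletes $a$ and adds two pendant vertices coloured $S^2\times S^1$, one at each former endpoint. (If $a$ is a loop at $v$, the single vertex $v$ loses its self-edge and gains two pendant $S^2\times S^1$ neighbours.) One checks this move does not change $M$ up to isomorphism by the standard fact that cutting along a nonseparating $2$-sphere and capping off, then taking connected sum back with $S^2\times S^1$, recovers the original manifold; this is exactly the compact-case argument applied locally, and it is legitimate in the open setting because the move is supported near a single sphere $S_a$ and leaves the rest of the connected sum structure untouched.

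Next I would run this move to kill all cycles. Define a spanning tree $T_0\subseteq G$ (which exists since $G$ is connected and locally finite, using a countable exhaustion of $G$ by finite connected subgraphs and taking a nested sequence of spanning trees, or equivalently invoking that every connected graph has a spanning tree). The set $A=E(G)\setminus E(T_0)$ of non-tree edges is the set we wish to eliminate. Crucially, $A$ is \emph{locally finite in $G$}: each vertex has finite degree, so only finitely many edges of $A$ are incident to any given vertex. This lets us perform the moves over all of $A$ "simultaneously" — or, more carefully, along a countable exhaustion, noting that each individual move affects only the two (or one) vertices incident to the edge being removed and adds pendant vertices there, so the moves at edges with disjoint endpoints genuinely commute, and the moves accumulate only where $G$ already had vertices, hence the limiting graph is still locally finite. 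After performing the move on every edge of $A$, the underlying graph becomes $T_0$ with a finite bouquet of pendant $S^2\times S^1$-vertices grafted at each vertex that was an endpoint of some non-tree edge; this is a tree $T$, and its colouring $f'$ extends $f$ on $V(G)\subseteq V(T)$ and is $S^2\times S^1$ on the new vertices. By construction $M(T,f')\cong M(G,f)\cong M$, completing the proof.

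The main obstacle is the passage to the infinite case: one must make sure that the (possibly infinitely many) elementary moves can be carried out coherently and that the resulting infinite graph is genuinely locally finite and that $M(T,f')$ is well-defined and isomorphic to $M$. I would handle this by writing $G=\bigcup_n G_n$ as an increasing union of finite connected subgraphs with $G_n$ obtained from $G_{n-1}$ by adding finitely many vertices and edges, choosing the spanning trees compatibly ($T_0\cap G_n$ a spanning tree of $G_n$), doing all the moves inside $G_n$ first (a finite process, valid by the compact-case statement), and checking the constructions stabilise on compact pieces of $M$. The isomorphism $M(T,f')\cong M$ is then assembled from the finite-stage isomorphisms by the standard fact that $M(G,f)$ depends only on the coloured graph up to the local modifications performed, together with local finiteness ensuring the identifications glue. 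A secondary point to verify is the orientation bookkeeping: the $S^2\times S^1$ summands must be inserted with orientations consistent with the orientation-reversing gluing convention in the Construction, but since $S^2\times S^1$ admits an orientation-reversing self-diffeomorphism this causes no difficulty. Everything else — that deleting non-tree edges yields a tree, that pendant attachments preserve the tree property, that a locally finite connected graph has a spanning tree — is routine graph theory.
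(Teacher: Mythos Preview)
Your strategy---pass to a spanning tree of the dual graph $G$ and trade each non-tree edge for an $S^2\times S^1$ summand---is natural, but the elementary move is miscounted. Cutting along a nonseparating sphere $S_a$, capping off, and taking connected sum with \emph{one} copy of $S^2\times S^1$ recovers $M$ (this is precisely the fact you quote), so the correct graph operation deletes $a$ and attaches \emph{one} pendant $S^2\times S^1$ vertex, not two. With two pendants you produce $M\#(S^2\times S^1)$ rather than $M$: already for $G$ a single vertex with a loop and $f(v)=0$, so that $M(G,f)\cong S^2\times S^1$, your move yields $(S^2\times S^1)\#(S^2\times S^1)$. There is a second, subtler gap. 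The isomorphism realising a single move is \emph{not} supported near $S_a$: if $v,v'$ are the endpoints of $a$, then in $M(G,f)$ the piece $Y_v\cup_{S_a}Y_{v'}$ is connected, whereas in $M(G',f')$ the corresponding piece $(Y'_v\cup Y_w)\sqcup Y'_{v'}$ is not, so no isomorphism $M(G,f)\to M(G',f')$ can be the identity on the complement. Consequently your claims that moves at edges with disjoint endpoints ``genuinely commute'' and that the finite-stage isomorphisms ``stabilise on compact pieces of $M$'' do not follow from local finiteness alone; making them true requires a genuine argument that you have not supplied.

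The paper sidesteps both issues by never modifying $M$. Starting from any spherical exhaustion $(A_n)$, Lemma~\ref{lem:AB} enlarges each $A_n$ to a compact $B_n\supset A_n$ by adding punctured balls so that every component of $M\setminus B_n$ has \emph{connected} boundary; this forces every sphere of $\partial B_n$ to separate $M$, so the dual graph of the locally finite family $\bigcup_n\partial B_n$ is already a tree. Since the added pieces are punctured $3$-spheres, $\widehat{B_n}$ differs from $\widehat{A_n}$ only by finitely many $S^2\times S^1$ factors, which are then recorded by attaching finite subtrees. The $S^2\times S^1$ summands thus appear by reading off the prime decomposition of each compact piece rather than by performing surgeries, and no convergence-of-isomorphisms argument is needed.
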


Our next task is to define combinatorial invariants of coloured trees and explain the relationship between these invariants and the topological invariants of decomposable 3-manifolds introduced earlier. In particular, Proposition~\ref{prop:invariants} below shows that the latter can be effectively calculated.

Let $\cX=\{P_k\}_{k\in I}$ be a family of pairwise non-isomorphic closed oriented manifolds such that $X_0=S^3$ and $P_k$ is a prime manifold for all $k>0$. Let $(T,f)$ be a coloured tree and let $M=M(T,f)$.
For every $k\in I\setminus\{0\}$, we denote by $n_k(T,f)\in\textbf{N}\cup \{\infty\}$ the number of vertices $v$ of $T$ such that $f(v)=k$. For every pre-end $e=U_1\supset U_2\supset \cdots$ of $T$ we let $n_k(e)$ be the limit as $n$ goes to $\infty$ of the number of vertices $v$ in $U_n$ such that $f(v)=k$. This passes to a well-defined invariant $n_k(e^*)$ for ends of $T$. We let $E_k(T,f)$ denote the set of ends $e^*$ of $T$ such that $n_k(e^*)=\infty$.

\begin{prop}\label{prop:invariants}
Let $\mathcal X$ be as above. Let $(T,f)$ be a coloured tree and $M=M(T,f)$. Then the following assertions hold:
\begin{enumerate}
\item For every $k\in I\setminus\{0\}$ we have $n_{P_k}(M)=n_k(T,f)$. 
\item There is a homeomorphism $\phi:E(M)\to E(T)$ such that for every $k\in I\setminus\{0\}$, we have $E_k(T,f)=\phi(E_{P_k}(M))$.
\item For every prime oriented manifold $P$ which is not isomorphic to a member of $\cX$, we have $n_P(M)=0$ and $E_P(M)=\emptyset$.
\end{enumerate}
\end{prop}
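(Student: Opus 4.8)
The plan is to work with a spherical exhaustion of $M = M(T,f)$ adapted to the tree structure, exploiting the fact that every edge $a$ of $T$ gives an embedded sphere $S_a$ in $M$, and that cutting along a finite subcollection of these spheres exhibits $M$ as built from pieces $Y_v$. Concretely, fix an exhaustion $T_1 \subset T_2 \subset \cdots$ of $T$ by finite connected subtrees with $\bigcup_n T_n = T$; let $K_n \subset M$ be the union of the $Y_v$ for $v \in V(T_n)$, capped off along those $S_a$ with $a$ an edge of $T$ having exactly one endpoint in $T_n$. Then $K_n$ is a compact submanifold of $M$ with spherical boundary, $K_n \subset \Int K_{n+1}$, and $\bigcup_n K_n = M$; moreover $\widehat{K_n}$ is the closed connected sum $M(T_n, f|_{T_n})$, whose Kneser-Milnor decomposition, by the compact case, has exactly $n_k(T_n,f)$ summands isomorphic to $P_k$ (for $k>0$), plus some number of $S^2\times S^1$ and $S^3$ summands.

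For assertion (1): the inequality $n_{P_k}(M) \ge n_k(T,f)$ is immediate, since $n_{P_k}(\widehat{K_n}) = n_k(T_n,f) \to n_k(T,f)$ and each $K_n$ is a competitor in the supremum defining $n_{P_k}(M)$. For the reverse inequality I would invoke the theory of spherical exhaustions from Section~\ref{sec:preliminaries}: any compact submanifold $K \subset M$ with spherical boundary is contained, up to isotopy, in some $K_n$ (after possibly enlarging $n$ so that $K$ lies in the interior and its boundary spheres are isotoped into the standard position relative to the $S_a$'s), and then $n_{P_k}(K) \le n_{P_k}(\widehat{K_n})$ because a prime summand of a capped-off submanifold persists as a prime summand of any larger capped-off submanifold. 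Here I am assuming the key technical lemma — presumably proved in Section~\ref{sec:preliminaries} — that spheres can be put in general position with respect to a spherical exhaustion and that the $n_P$ counts are monotone under inclusion of compact pieces; this monotonicity is really a consequence of the uniqueness part of Kneser-Milnor applied to $\widehat{K_n}$.

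For assertion (2): the homeomorphism $\phi : E(M) \to E(T)$ comes from matching the exhaustion $\{K_n\}$ of $M$ with the exhaustion $\{T_n\}$ of $T$ — the complementary components of $K_n$ correspond bijectively to the complementary components of $T_n$ (each such component of $M \setminus K_n$ is the connected sum along the corresponding component subtree of $T \setminus T_n$), and this bijection is compatible with inclusion, hence induces a homeomorphism on inverse limits, i.e.\ on spaces of ends (see the appendix). Under this $\phi$, an end $e$ of $M$ has colour $P_k$ iff every neighbourhood $U$ of $e$ has $n_{P_k}(U) \ne 0$; tracing through the correspondence and assertion (1) applied to the neighbourhoods, this says exactly that the corresponding end $\phi(e)$ of $T$ has $n_k(\phi(e)) = \infty$, i.e.\ $\phi(e) \in E_k(T,f)$. (That $n_{P_k}(U)\neq 0$ forces $n_{P_k}(U)=\infty$, the parenthetical claim in the introduction, also follows here: a neighbourhood of an end contains $K_{n+1}\setminus K_n$ pieces for all large $n$, and if the color appears once in the subtree past $T_n$ it reappears, since the subtree is infinite and connected with that vertex not separating it from infinity — more carefully, one uses that $E_k$ is determined by the $n_k(e)=\infty$ condition, and $n_{P_k}(U) \ge n_k$ of the corresponding infinite subforest.) Assertion (3) is then a clean consequence: if $P$ is prime and not isomorphic to any $P_k$, then since each $\widehat{K_n} = M(T_n,f|_{T_n})$ is a connected sum of copies of members of $\cX$ and of $S^2\times S^1$, its Kneser-Milnor decomposition contains no $P$ summand (uniqueness of prime decomposition, noting $S^2\times S^1$ is itself prime and not isomorphic to $P$), so $n_P(\widehat{K_n}) = 0$ for all $n$, whence $n_P(M) = 0$ and a fortiori $E_P(M) = \emptyset$.

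The main obstacle I anticipate is assertion (1)'s reverse inequality, i.e.\ showing that an arbitrary compact submanifold with spherical boundary does not "see" more prime summands than some $\widehat{K_n}$. This requires the general-position / isotopy machinery for embedded spheres relative to the family $\{S_a\}$ — making a given finite sphere system disjoint from all but finitely many $S_a$ and then comparing decompositions — which is precisely what the "spherical exhaustions" of Section~\ref{sec:preliminaries} are designed to handle; once that is in place the rest is bookkeeping with Kneser-Milnor and the (elementary) theory of ends as inverse limits.
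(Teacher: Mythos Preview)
Your approach is essentially the same as the paper's: both exhaust $M$ by unions $K_n$ of the pieces $Y_v$ over finite subtrees $T_n$, reduce the computation of $n_{P_k}$ to Kneser--Milnor uniqueness on $\widehat{K_n}$, and identify $E(M)$ with $E(T)$ via this exhaustion. The paper realises the end-space homeomorphism by an explicit proper map $F:M\to T$ collapsing each core to its vertex, but this is equivalent to your matched-exhaustion description.

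One point deserves correction: the ``general-position/isotopy machinery for embedded spheres relative to $\{S_a\}$'' that you anticipate is not needed and does not appear in the paper. For the reverse inequality in assertion~(1), given an arbitrary compact $K\subset M$ with spherical boundary, choose $n$ with $K\subset \Int K_n$; then $\widehat{K_n}$ is a connected sum of $\widehat K$ with the capped-off components of $\overline{K_n\setminus K}$ (possibly with extra $S^2\times S^1$ factors if some boundary spheres of $K$ fail to separate), so Milnor uniqueness gives $n_{P_k}(K)\le n_{P_k}(K_n)=n_k(T_n,f)\le n_k(T,f)$ directly. The paper expresses this step as ``we may assume $L$ is a union of $Y_v$'s'', which is exactly this enlargement; no isotopy of spheres is involved. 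The same monotonicity argument handles assertion~(3) (take $L\supset K$ a union of $Y_v$'s, so $n_P(K)\le n_P(L)=0$), which is what you do as well.
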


\begin{example} \label{example:one}
Let $(P_n)_{n \in \NN}$ be a sequence of pairwise non isomorphic oriented $3$-manifolds, such that $P_0=S^3$ and $P_n$ is prime for all $n>0$. Let $Z$ be the Cayley graph of $\ZZ$ with standard generators. We define two colourings $f_1,f_2$ on $G$ by setting $f_1(n)=0$ if $n<0$,  $f_1(n)=n$ if $n\ge 0$, $f_2(n)=-2n$ if $n<0$ and $f_2(n)=2n+1$ if $n\ge 0$.

Then for every $i\in \NN\setminus \{0\}$ we have $n_i(Z,f_1)=n_i(Z,f_2)=1$, and the subsets $E_i(Z,f_j)$ are empty for all $i>0$, $j\in\{1,2\}$. It follows from Proposition~\ref{prop:invariants} that the manifolds $M(Z,f_1)$ and $M(Z,f_2)$ have the same invariants. However, $M_1$ and $M_2$ are not isomorphic. Indeed, $M_1$ has an end that has a neighbourhood diffeomorphic to $S^2\times\RR$, while no end of $M_2$ has this property.
\end{example} 

Finally, we state our realisation theorem.
\begin{theo}\label{theo:construction}
Let $E$ be a compact, metrisable, totally disconnected space. Consider a collection $E_1, \dots , E_k$ of  closed subsets of $E$. Let $n_1,\ldots,n_\ell$ be nonnegative integers. Let $\cX=\{P_1, \ldots, P_k, Q_1,\ldots,Q_\ell\}$ be a family of prime manifolds. Then there exists a connected oriented $3$-manifold $M$ which is a connected sum along a tree of members of $\cX \cup \{S^3\}$ with the following properties:
\begin{itemize}
\item For every $i \in \{1,\ldots,\ell\}$ we have $n_{M}(Q_i)=n_i$.
\item There exists a homeomorphism $\phi : E(M) \to E$ such that $\phi(E_{P_i}(M))=E_i$ for every $i \in \{1,\ldots,k\}$. 
\end{itemize}
\end{theo}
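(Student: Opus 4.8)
The plan is to reduce the statement, via Proposition~\ref{prop:invariants}, to a purely combinatorial construction of a coloured tree. We may assume that the members of $\cX$ are pairwise non-isomorphic, and we reindex $P_1,\dots,P_k,Q_1,\dots,Q_\ell$ together with $S^3$ as a family $\cX'=\{X_m\}$ with $X_0=S^3$ and $X_m$ prime for $m>0$. By Proposition~\ref{prop:invariants} it then suffices to produce a locally finite coloured tree $(T,f)$ over $\cX'$ such that, for each $j$, exactly $n_j$ vertices carry the colour $Q_j$, and such that there is a homeomorphism $E(T)\to E$ carrying, for each $i$, the set $E_i(T,f)$ of ends where the colour $P_i$ occurs infinitely often onto $E_i$; for then $M=M(T,f)$ has the required properties. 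If $E=\emptyset$ one takes for $T$ a finite path with $n_1+\dots+n_\ell$ vertices coloured by the $Q_j$'s (a single $S^3$-vertex if this number is $0$), so assume henceforth that $E\neq\emptyset$.

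First I would realise $E$ as a space of ends. Fix a metric on $E$ and choose finite partitions $\cP_0=\{E\},\cP_1,\cP_2,\dots$ of $E$ into nonempty clopen sets, with $\cP_{n+1}$ refining $\cP_n$ and each piece of $\cP_n$ of diameter less than $1/n$; this is possible because $E$ is compact, metrisable and totally disconnected. Let $T_0$ be the tree whose level-$n$ vertices are the pieces of $\cP_n$, with an edge between $B\in\cP_{n+1}$ and the unique piece of $\cP_n$ containing it, and colour every vertex $S^3$. Then $T_0$ is locally finite and, since $E\neq\emptyset$, has infinitely many vertices; and the theory of spaces of ends yields a canonical homeomorphism $E(T_0)\cong\varprojlim\cP_n=E$ under which, for a vertex $v$ at level $n$, the set of ends of the branch $T_v$ of descendants of $v$ is the clopen piece $B_v\in\cP_n$ that $v$ represents. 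In particular the ray from the root to an end $e$ meets level $n$ in a single vertex $v_n(e)$, the pieces $B_{v_n(e)}$ decrease to $\{e\}$, and they form a neighbourhood basis of $e$ in $E$.

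Next I would insert the prescribed summands. For each edge $a$ of $T_0$, with lower endpoint $v$ (the one farther from the root), and each $i$ such that $B_v\cap E_i\neq\emptyset$, subdivide $a$ by a new vertex coloured $P_i$; each edge is thereby subdivided into a path of length at most $k+1$, so the new tree is still locally finite and, edge subdivision not affecting the space of ends, its space of ends is still canonically $E$. Then choose pairwise disjoint finite sets $V_1,\dots,V_\ell$ of vertices of $T_0$ with $|V_j|=n_j$ (possible since $T_0$ is infinite) and recolour the vertices of $V_j$ by $Q_j$. Call $(T,f)$ the result and set $M=M(T,f)$; all colours lie in $\cX'$ and $T$ is a tree. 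Since $n_j(T,f)=n_j$, Proposition~\ref{prop:invariants}(1) gives $n_M(Q_j)=n_j$, and Proposition~\ref{prop:invariants}(2) yields a homeomorphism $\phi:E(M)\to E(T)=E$ with $\phi(E_{P_i}(M))=E_i(T,f)$; so it remains to check that $E_i(T,f)=E_i$. If $e\in E_i$, then every $B_{v_n(e)}$ contains $e$, hence meets $E_i$, so a $P_i$-vertex was inserted on the edge of $T_0$ joining $v_{n-1}(e)$ to $v_n(e)$ for every $n\geq1$; thus $P_i$ occurs infinitely often towards $e$, i.e. $e\in E_i(T,f)$. If $e\notin E_i$, then, $E_i$ being closed, some clopen neighbourhood $V$ of $e$ misses $E_i$; since the $B_{v_n(e)}$ form a neighbourhood basis of $e$ there is $N$ with $B_{v_n(e)}\subseteq V$ for $n\geq N$, so every descendant $w$ of $v_N(e)$ in $T_0$ has $B_w\subseteq V$ and hence $B_w\cap E_i=\emptyset$, whence no $P_i$-vertex was inserted below $v_N(e)$ and $P_i$ occurs only finitely often towards $e$, i.e. $e\notin E_i(T,f)$. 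This proves $E_i(T,f)=E_i$ and completes the argument.

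The step I expect to be the main obstacle is this last verification, and within it the implication ``$e\notin E_i\Rightarrow P_i$ occurs finitely often towards $e$'': this is exactly where one must use both that $E_i$ is closed and that the pieces of $\cP_n$ have diameter tending to $0$, so that the branches of $T_0$ genuinely shrink to individual ends and ultimately avoid $E_i$. Everything else — realising a compact metrisable totally disconnected space as the space of ends of a locally finite tree (a standard fact), and the invariance of the space of ends under edge subdivision — is routine material on spaces of ends drawn from the appendix.
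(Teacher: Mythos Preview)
Your proof is correct and follows essentially the same strategy as the paper's: realise $E$ as the end space of a locally finite rooted tree, attach a $P_i$-coloured vertex near each node whose associated clopen piece meets $E_i$, and invoke Proposition~\ref{prop:invariants}. The only cosmetic differences are that the paper embeds $E$ in the Cantor set and uses the dyadic tree (pruned to prefixes of $E$) rather than your direct clopen-partition tree, and it hangs a finite subtree $T'_v$ at each vertex instead of subdividing edges; your argument is in fact more self-contained, since you spell out the verification $E_i(T,f)=E_i$ (using closedness of $E_i$) that the paper leaves implicit.
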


\section{Preliminaries}\label{sec:preliminaries}
 
 \subsection{Spherical exhaustions}
Recall that an \bydef{exhaustion} of an open manifold $M$ is a sequence $(K_n)$ of compact connected submanifolds of $M$ such that $M=\bigcup_n K_n$, and $\overset{\circ}{K}_{n+1}\supset K_n$ for every $n$. We may assume that for each $n$, every
component of $M \setminus K_n$ is unbounded, i.e.~not contained in any compact subset. 
A \bydef{spherical exhaustion} is an exhaustion  $(K_n)$ where each $K_n$ has spherical boundary. 

Let $P\in\cP$. Let $M$ be an open connected oriented $3$-manifold. Let $e=U_1\supset U_2 \supset \dots\,$ be a pre-end of $M$. Then the sequence $(n_P(U_i))$ is non increasing. Let $n_P(e)$ denote the limit of this sequence. 

\begin{lem}
The number $n_P(e)$ is either 0 or $+\infty$.
\end{lem}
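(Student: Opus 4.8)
The plan is to show that if $n_P(e) > 0$, then in fact $n_P(e) = +\infty$. Since $n_P(e)$ is defined as the limit of the non-increasing sequence $(n_P(U_i))$, it suffices to prove: whenever $U$ is a neighbourhood of $e$ with compact boundary and $n_P(U) \geq 1$, then $n_P(U) = +\infty$. So assume there is a compact submanifold $K \subset U$ with spherical boundary such that the Kneser–Milnor decomposition of $\widehat{K}$ (summed over components) contains at least one copy of $P$. The idea is that we can "push" such a $K$ deeper into the end and find another disjoint copy, then iterate.

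**First I would** set up the iteration carefully. Given $K \subset U$ as above, since $K$ is compact and $e$ has neighbourhoods $U = U_1 \supset U_2 \supset \cdots$ with $\bigcap_i U_i$ containing no compact set (equivalently shrinking to the end), there is some $i$ with $K \subset U \setminus U_i$, hence $U_i \subset U$ is a neighbourhood of $e$ disjoint from $K$. Now I claim $n_P(U_i) \geq 1$ as well: this is where the prime, oriented nature of $P$ and the structure of connected sums enter. Intuitively, the copy of $P$ that appears in $\widehat K$ must "come from somewhere", and since $P$ is prime and not $S^3$, removing a neighbourhood of $e$ cannot destroy all copies of $P$ unless none were there to begin with — but more carefully, the cleanest route is: since $M$ is connected, pick a compact connected submanifold $K'$ with spherical boundary containing $K$ in its interior and meeting $U_i$; then $\widehat{K'}$ still contains the copy of $P$ coming from $\widehat K$ (adding handles/summands along spheres only adds $S^2 \times S^1$ or other prime summands, it does not cancel existing prime summands — this follows from uniqueness in Kneser–Milnor). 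Then $K' \cap U_i$, after capping, contains a copy of $P$: here one uses that a prime summand of $\widehat{K'}$ localizes, i.e. there is an embedded $P \setminus B^3$ inside $K'$, and since such a submanifold is connected it lies in a single component of $K'$ split along $\bord U_i$ — if it meets $U_i$ we get a copy of $P$ in $U_i$; if it does not, then it lies in $K' \setminus U_i$, but we can instead have chosen $K'$ to contain a copy pushed into $U_i$ using connectedness of $M$ and the fact that the inclusion $U_i \hookrightarrow M$ can be joined to $K$ by a path, sliding a punctured-$P$ summand along a tube.

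**The hard part will be** making precise the claim that "a copy of $P$ can be slid into $U_i$", i.e. that $n_P(U_i) \geq 1$, rather than just asserting $n_P(U) \geq 1$ — the monotonicity of $n_P(U_i)$ alone does not give this, since $U_i \subset U$ could a priori have smaller $n_P$. The robust argument is: $M$ contains an embedded punctured copy $P^- := P \setminus \overset{\circ}{B^3}$ (coming from the copy of $P$ in $\widehat K$, which by the localization of prime summands can be realized as an embedded submanifold of $K$ with one spherical boundary component). Since $U_i$ is a nonempty neighbourhood of $e$ with connected complement issues aside, and $M$ is connected, choose a path $\gamma$ from a point of $\bord P^-$ to a point of $U_i$; thicken $\gamma$ to an embedded arc times a disk (a tube) disjoint from the interior of $P^-$. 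Doing connected sum of $P^-$ with a small ball in $U_i$ along this tube produces an embedded $P^-$ whose non-trivial part sits inside $U_i$ — more precisely, one obtains an embedded sphere $S$ in $U_i$ bounding on one side a compact manifold $C$ with $\widehat C \cong P$. Taking $K'' = C \subset U_i$ compact with spherical boundary gives $n_P(K'') \geq 1$, hence $n_P(U_i) \geq 1$. Now iterate: $U_i$ plays the role of $U$, producing a copy of $P$ in some still-smaller neighbourhood $U_j$, disjoint from the previous ones; after $m$ steps we have found a compact submanifold of $U$ (namely the disjoint union of the $m$ punctured copies, completed to a connected one with spherical boundary by tubing) whose capped-off version has at least $m$ summands isomorphic to $P$. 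Hence $n_P(U) \geq m$ for all $m$, so $n_P(U) = +\infty$, and therefore $n_P(e) = +\infty$. I expect the tubing/connected-sum-along-an-arc step and the appeal to localization of prime summands (every prime summand of a closed $3$-manifold $N$ is realized by an embedded punctured prime submanifold of $N$) to be the technical core; everything else is bookkeeping with exhaustions and the monotonicity already noted in the excerpt.
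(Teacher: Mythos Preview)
Your overall strategy is sound: iterate to produce pairwise disjoint compact $K_1,K_2,\ldots\subset U$ each with $n_P(K_m)\ge 1$, whence $n_P(U)\ge m$ for every $m$. But the step you identify as ``the hard part'' is both wrong and unnecessary.

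The tubing argument does not work. Connecting an embedded $P^-\subset K$ to a small ball in $U_i$ by a thin tube does \emph{not} move the non-trivial topology into $U_i$; the resulting submanifold still has its $P$-summand sitting in $K$, with only a handle reaching into $U_i$. There is no isotopy pushing a punctured prime through a separating $2$-sphere into a topologically trivial region. Concretely, take $M=P\#\RR^3$ with its unique end $e$: deep neighbourhoods $U_i\cong S^2\times(i,\infty)$ satisfy $n_P(U_i)=0$, yet $M$ contains an embedded $P^-$. Your sliding argument would falsely give $n_P(U_i)\ge 1$. The same example shows that your stated reduction (``$n_P(U)\ge 1$ for one neighbourhood $U$ implies $n_P(U)=+\infty$'') is false without the standing hypothesis $n_P(e)>0$.

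The fix is immediate: since you are assuming $n_P(e)>0$ and $n_P(e)=\lim_i n_P(U_i)$ with the sequence non-increasing, you already have $n_P(U_i)\ge 1$ for every $i$. No sliding is needed; the iteration then runs as you describe.

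The paper's proof is considerably shorter and avoids iteration. It argues the contrapositive in one stroke: if some $n_P(U_i)$ is finite, choose $K\subset U_i$ with $n_P(K)=n_P(U_i)$, pass to $U_j$ disjoint from $K$, and observe that $n_P(U_j)=0$, since any $K'\subset U_j$ with $n_P(K')>0$ would give $n_P(K\cup K')=n_P(K)+n_P(K')>n_P(U_i)$, contradicting the choice of $K$. Both arguments rest on additivity of $n_P$ over disjoint unions; the paper uses it once, your (repaired) argument uses it infinitely often.
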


\begin{proof}
If $n_P(U_i)=\infty$ for every $i$, then $n_P(e)=\infty$. Otherwise, there exists $i$ such that $n_P(U_i)$ is finite. Thus there exists a compact submanifold $K\subset U_i$ with spherical boundary such that $n_P(K)=n_P(U_i)$. By definition of a pre-end, there exists $j>i$ such that $K\cap U_j=\emptyset$. Then $n_P(U_j)=0$, otherwise $U_j$ would contain some compact $K'$ with nonzero $n_P(K')$ and we would have $n_P(K\cup K')=n_P(K) + n_{P}(K')>n_P(K)$, contradicting the choice of $K$.
\end{proof}

Furthermore, if $e,e'$ are equivalent pre-ends, then it is readily checked that $n_P(e)=n_P(e')$. Thus we have a well-defined invariant $n_P(e^*)\in \{0,\infty\}$ for each end $e^*$ of $M$. We remark that the previously defined set of ends of $M$ of colour $P$  is the set of $e^*\in E(M)$ such that $n_P(e^*)=\infty$.

\begin{lem}\label{lem:closed}
The set $E_P(M)$ is a closed subset of $E(M)$. 
\end{lem}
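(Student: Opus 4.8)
The plan is to prove that the complement $E(M)\setminus E_P(M)$ is open in $E(M)$. Recall from the remark preceding the statement that $E_P(M)=\{e^{*}\in E(M):n_P(e^{*})=\infty\}$, and from the previous lemma that $n_P(e^{*})\in\{0,\infty\}$ for every end $e^{*}$; hence $e^{*}\notin E_P(M)$ is equivalent to $n_P(e^{*})=0$. So I fix an end $e^{*}$ with $n_P(e^{*})=0$ and choose a representative pre-end $U_1\supset U_2\supset\cdots$. Since the sequence $(n_P(U_i))$ is non-increasing with limit $0$, there is an index $i$ with $n_P(U_i)=0$; put $U=U_i$, an open neighbourhood of $e^{*}$ with compact frontier.

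The crucial ingredient is the monotonicity of $n_P$: if $V$ is an open subset of $M$ with $V\subseteq U$, then $n_P(V)\le n_P(U)$, because every compact submanifold with spherical boundary contained in $V$ is also such a submanifold of $U$. In particular, $n_P(U)=0$ forces $n_P(V)=0$ for every open $V\subseteq U$. Now recall that a basis for the topology of $E(M)$ is given by the sets $U^{*}$, where $U$ ranges over the open subsets of $M$ with compact frontier and $U^{*}$ denotes the set of ends admitting a representative pre-end that is eventually contained in $U$. For the set $U$ selected above, $U^{*}$ is an open neighbourhood of $e^{*}$ in $E(M)$; and given any end ${e'}^{*}\in U^{*}$, the terms of a representative pre-end of ${e'}^{*}$ eventually lie in $U$, hence have vanishing $n_P$ by monotonicity, so $n_P({e'}^{*})=0$ and ${e'}^{*}\notin E_P(M)$. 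Therefore $U^{*}\subseteq E(M)\setminus E_P(M)$, which shows the complement is open, so $E_P(M)$ is closed.

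I do not anticipate a genuine obstacle here: the argument is essentially the observation that the vanishing of $n_P$ is inherited by smaller neighbourhoods, fed into the definition of the topology on the space of ends. The only points needing care are bookkeeping ones — checking that the neighbourhoods with compact boundary used in the definition of the colour of an end are exactly the sets that generate the topology of $E(M)$ recalled in the appendix, and that replacing pre-ends by their equivalence classes is harmless, which holds because $n_P$ has already been shown to be an invariant of the equivalence class of a pre-end.
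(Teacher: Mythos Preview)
Your proof is correct and follows essentially the same approach as the paper's: show the complement is open by picking $e^{*}\notin E_P(M)$, finding a neighbourhood $U_i$ with $n_P(U_i)=0$, and concluding that every end in $U_i^{*}$ also has $n_P=0$. You are simply more explicit about the monotonicity step and the role of the basis $\{U^{*}\}$ for the topology of $E(M)$, which the paper leaves implicit.
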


\begin{proof}
Let $e^\ast \in E(M) \setminus E_P(M)$ and let $e=U_1 \supset U_2 \supset \ldots$ representing $e^\ast$. Then  $n_P(U_i)=0$ for all $i$ large enough, which implies that all ends $e'^\ast \in U_i^\ast$ satisfy $n_P(e'^\ast)=0$. Therefore $U_i^\ast$ is an open neighbourhood of $e^\ast$ disjoint from $E_P(M)$, proving that $E(M) \setminus E_P(M)$ is open.
\end{proof}

\subsection{Proof of Proposition~\ref{prop:invariants}}
 \label{subsec:prop:invariant}
Let $\cX=\{P_k\}_{k\in I}$ be a family of pairwise non-isomorphic closed oriented manifolds such that $X_0=S^3$ and $P_k$ is prime for all $k>0$. Let $(T,f)$ be a coloured tree and let $M=M(T,f)$. Recall from Remark \ref{rem:M(G,f)} that for every vertex $v$ in $T$ we have a submanifold $Y_v$ of $M$ and for every edge $a$ of $T$ there is an embedded $2$-sphere $S_a$ in $M$.

We construct a proper continuous map $F:M\longrightarrow T$ which naturally induces a homeomorphism $\phi: E(M)\longrightarrow E(T)$. We recall that $T$ is endowed with the topology given by the length distance for which the edges are isometric to the interval $[0, 1]$. For each edge $a$ of $T$, let $Z_a\subset M$ be a tubular neighbourhood of $S_a$, whose size is chosen so that $Z_a\cap Z_{a'}=\emptyset$ if $a\ne a'$. We also choose a parametrisation $Z_a\simeq S_a\times (-1,1)$ with $S_a\simeq S_a\times \{0\}$. For each vertex $v\in T$ we define $C_v=Y_v\setminus\bigcup_aZ_a$, which we call the core of $Y_v$. Notice that $C_v$ is homeomorphic to $Y_v$.  We then define $F:M\longrightarrow T$ by
\begin{itemize}
  \item[1.] $F(C_v)=v$ for each vertex $v\in T$.
  \item[2.] $F(Z_a)=a$ for each edge $a\in T$. More precisely, $$F: Z_a\simeq S_a\times (-1,1)\longrightarrow (-1,1) \simeq (0,1) \simeq  a\setminus \partial a$$ is obtained by composing the projection on the second factor of the parametrisation of $Z_a$ with the map $s \mapsto \frac{1}2 s+\frac{1}2$.
\end{itemize}
We notice that $F^{-1}(F(Y_v))=Y_v$, that $F(S_a)$ is the middle point of $a$  and that $F^{-1}(F(S_a))=S_a$. Let $\phi : E(M)\longrightarrow E(T)$  be the continuous map induced by $F$.

\begin{lem}\label{lem:homeo}
The map $\phi$ is a homeomorphism.
\end{lem}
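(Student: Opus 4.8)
The plan is to show that $\phi\colon E(M)\to E(T)$ is a continuous bijection between compact Hausdorff spaces, hence a homeomorphism. Since $E(M)$ and $E(T)$ are known to be compact and Hausdorff (they are spaces of ends of manifolds/locally finite graphs; see the appendix), and $\phi$ is already continuous by construction, it suffices to prove that $\phi$ is bijective. To get at this I would exploit the fact that $F\colon M\to T$ is proper and that it has the explicit ``block'' structure described above: over each open edge $F$ restricts to the projection $S_a\times(-1,1)\to(-1,1)$, and each vertex $v$ pulls back to the compact set $Y_v$ (equivalently $F^{-1}(\text{star of }v)$ is the interior of a slightly enlarged $Y_v$). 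The key structural facts recorded just before the lemma — $F^{-1}(F(Y_v))=Y_v$ and $F^{-1}(F(S_a))=S_a$ — are exactly what make preimages of connected sets connected and bounded sets bounded.

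The main step is to produce, for each pre-end $e=U_1\supset U_2\supset\cdots$ of $T$, a compatible pre-end of $M$ mapping to it, and to show this is well-defined on equivalence classes and injective. For surjectivity: given a pre-end of $T$ represented by a nested sequence of unbounded components $U_n$ of complements of finite subtrees, I would take finite subcomplexes $T_n\subset T$ with $U_n$ a component of $T\setminus T_n$, set $L_n = F^{-1}(N(T_n))$ where $N(T_n)$ is a suitable compact neighbourhood of $T_n$ (a union of cores $C_v$ and tubes $Z_a$), and let $V_n$ be the component of $M\setminus L_n$ lying over $U_n$. Properness of $F$ makes each $L_n$ compact, and the block structure makes $F(V_n)=U_n$ and $V_n$ connected; the resulting $V_1\supset V_2\supset\cdots$ is a pre-end of $M$ mapping to $e$. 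For injectivity: if two pre-ends $V_n$, $V_n'$ of $M$ map to the same pre-end of $T$, then for each $n$ one can find $m$ with $F(V_m)\subset U_n$ and $F(V_m')\subset U_n$; since $F^{-1}(U_n)$ is connected (its components correspond, via the block structure, bijectively to components of $U_n$, and $U_n$ is connected) and contains eventual tails of both sequences, $V_m$ and $V_m'$ eventually lie in a common component of some $M\setminus L_k$, so they define the same end. This shows $\phi$ is injective, and combined with surjectivity, bijective.

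The technical heart — and the step I expect to be the main obstacle — is verifying the bijection between connected components: namely that for any finite subcomplex $T'\subset T$ (of the form $N(T_n)$), the components of $M\setminus F^{-1}(T')$ are in natural bijection with the components of $T\setminus T'$, with $F$ carrying each onto the corresponding one. One inclusion (each component of $M\setminus F^{-1}(T')$ maps into a single component of $T\setminus T'$) is immediate from continuity and connectedness; the reverse — that $F^{-1}$ of a connected component of $T\setminus T'$ is connected — needs the explicit geometry: a path in $T$ between two vertices lifts to a path in $M$ because consecutive $Y_v$'s along an edge $a$ overlap in the tube $Z_a$, and within each $Y_v$ and each $Z_a$ the relevant pieces are path-connected. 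Here one must be slightly careful about vertices $v$ with a loop, where $Y_v\to M$ is not injective, but this only helps connectivity. Once this component-correspondence is established, continuity of $\phi^{-1}$ is automatic from the definition of the topology on spaces of ends (basic open sets $U^*$ correspond under $\phi$), and the lemma follows from compactness.
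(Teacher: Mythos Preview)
Your approach is sound but takes a different route from the paper. The paper builds an explicit proper section $F':T\to M$: pick $x_v\in C_v$ for each vertex and, for each edge $a$ joining $v$ to $v'$, an embedded arc $\gamma_a\subset C_v\cup Z_a\cup C_{v'}$ from $x_v$ to $x_{v'}$ crossing $Z_a$ as a slice $\{\theta_a\}\times(-1,1)$; then $F'$ sends $a$ homeomorphically onto $\gamma_a$. Since $F\circ F'$ fixes vertices and preserves each edge setwise, the induced map $\phi':E(T)\to E(M)$ satisfies $\phi\circ\phi'=\mathrm{id}$, giving surjectivity instantly. For injectivity the paper observes that distinct ends of $M$ can be represented by pre-ends whose terms are bounded by spheres from $\{S_a\}$; such sets are $F$-saturated ($F^{-1}(F(U_i))=U_i$), so disjointness in $M$ forces disjointness of the images in $T$, hence $\phi(e^*)\neq\phi(e'^*)$. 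This sidesteps your component-correspondence verification entirely.

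Your route trades the section for a direct analysis of preimages; it buys a slightly stronger intermediate statement (the bijection between complementary components of $M\setminus L_k$ and of $T\setminus N(T_k)$) at the cost of more checking. Two minor remarks: the aside about vertices with loops is moot here, since $T$ is a tree; and your injectivity paragraph needs one more sentence to close --- the fact that $V_m,V_m'$ lie in the common connected set $F^{-1}(U_n)$ does not by itself force the two ends to coincide, but it does once you note that the sets $F^{-1}(U_n)$ are precisely the unbounded components of $M\setminus L_k$ for your exhaustion $(L_k)$, so by the lemma in the appendix they already determine ends of $M$ uniquely.
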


\begin{proof}
Let us choose a point $x_v\in C_v$ for each vertex $v$ and, for every edge $a$ joining $v$ to $v'$, an embedded path $\gamma_a\subset C_v\cup Z_a\cup C_{v'}$ joining $x_v$ to $x_{v'}$ in such a way that $\gamma_a\cap Z_a\simeq \{\theta_a\}\times (-1,1)$ for some $\theta_a\in S_a$. We then define a continuous proper map $F': T \longrightarrow M$ so that, for every edge $a\in T$, it is a homeomorphism between $a$ and $\gamma_a$ and  $F'(v)=x_v$.
The map $F'$ satisfy the following two properties:
\begin{itemize}
  \item[$1'$.] $F\circ F'(v)=v$ for every vertex $v\in T$.
  \item[$2'$.] $F\circ F'$ globally preserves each edge $a\in T$. 
\end{itemize}
Let $\phi' : E(T)\longrightarrow E(M)$ be the continuous map induced by $F'$.

By compactness of the spaces of ends it suffices to show that $\phi$ is bijective. By construction $\phi\circ\phi'=id$, hence $\phi$ is surjective. Let $e^* \ne e'^* \in E(M)$ be two ends of $M$ respectively represented by the pre-ends $e=U_1\supset U_2\supset\dots$ and $e'=U'_1\supset U'_2\supset\dots$. We can choose the $U_i$'s and the $U'_i$'s so that they are bounded by spheres in the family $\{S_a\}$. Hence they satisfy, for all $i$, $F^{-1}(F(U_i))=U_i$ and $F^{-1}(F(U'_i))=U'_i$. Since $e\not\simeq e'$ there exist two indices $i$ and $j$ such that $U_i\cap U'_j=\emptyset$. Consequently, $F(U_i) \cap F(U'_j)=\emptyset$. It follows that $\phi (e^*)\ne \phi (e'^*)$. Hence $\phi$ is injective.
\end{proof}

\begin{proof}[Proof of Proposition \ref{prop:invariants}, assertion $1$]
Let us assume that $n_k=n_k(T,f)$ is finite. There exists $n_k$ vertices $v$ of $T$ such that $\widehat Y_v\simeq P_k$. Let $K$ be the union of these $Y_v$.  Then $n_{P_k}(K)=n_k$ and therefore $n_{P_k}(M)\geq n_k$. Let us assume that $n_{P_k}(M)> n_k$. This means, by definition, that there exists a compact submanifold with spherical boundary $L\subset M$ such that $n_k(L)>n_k$. We may assume that $L$ is connected and a union of $Y_v$'s. Consequently, $\widehat L$ can be written as a connected sum of these $\widehat Y_v$ along a subtree $T'$ of $T$ using the restriction $f'$ of $f$ to $T'$. By Milnor's uniqueness there are exactly $n_{P_k}(L)$ components $Y_v$ such that $\widehat Y_v\simeq P_k$. Then $n_k(T', f')\geq n_{P_k}(L)$, and
$$n_k=n_k(T, f)\geq n_k(T', f')\geq n_{P_k}(L) >n_k\,,$$
a contradiction.

Now, if $n_k=n_k(T, f)=+\infty$, for all $n\in \textbf{N}$ there exists some $L_n\subset M$ a union of $Y_v$'s such that $n_{P_k}(L_n)=n$. This shows $n_{P_k}(M)\geq n$ for all $n$,  and hence that $n_{P_k}(M)=+\infty$.
\end{proof}

\begin{proof}[Proof of Proposition \ref{prop:invariants}, assertion $2$]
\mbox{}\\
There remains to prove that $\phi (E_{P_k}(M))=E_k(T,f)$, which is equivalent to
$$n_k(e^*)=+\infty\Longleftrightarrow n_k(\phi(e^*))=+\infty\,.$$
We have:
$$n_k(e^*)=+\infty\Longleftrightarrow n_k(e)=+\infty\Longleftrightarrow n_k(U_i)=+\infty\quad\forall i\,,$$
where $e=U_1\supset U_2\dots$ is a pre-end defining $e^*$. We may choose the $U_i$ saturated by $F$, i.e. satisfying $F^{-1}(F(U_i))=U_i$. Then we set $V_i=F(U_i)$. This is a connected set whose boundary is a finite union of points which are midpoints of edges and images by $F$ of the boundary of $U_i$, which is a finite union of spheres $S_a$.

Let $(K_n)$ be an exhaustion of $M$ by compact sets which are saturated by $F$,  i.e.~$F^{-1}(F(K_n))=K_n$. For $n\in\textbf{N}$, by definition of a pre-end, there exists $i\in\textbf{N}$ such that $U_i\cap K_n=\emptyset$. By the choice of saturated sets we have $V_i\cap F(K_n)=\emptyset$. By definition of $\phi$ the family $V_1\supset V_2\supset \dots$ is a pre-end representing $\phi (e^*)$. 

For $i\in\textbf{N}$ let $T_i$ be the subtree of $T$ whose vertices are the $v$'s such that $Y_v\subset U_i$. The same argument as in the proof of assertion $1$ yields
$$n_{P_k}(U_i)=+\infty\Longleftrightarrow n_k(T_i)=+\infty\,.$$
Therefore, by definition of $V_i$, the vertices of $T$ included in $V_i$ are those of $T_i$ and consequently, for all $i\in\textbf{N}$,
$$n_{P_k}(U_i)=+\infty\Longleftrightarrow n_k(T_i)=+\infty\Longleftrightarrow \vert \{v\in V_i; f(v)=k\}\vert =+\infty \,,$$
which shows that $n_k(\phi (e^*))=+\infty$.
\end{proof}

\begin{proof}[Proof of Proposition \ref{prop:invariants}, assertion $3$]
If the assertion is not true then there exists a prime manifold $P\not\in \mathcal{X}$ and a compact submanifold $K$ of $M$ with spherical boundary such that $n_P (K)\geq 1$. Let $L\subset M$ be a compact connected submanifold with spherical boundary such that $K\subset L$. We may assume that $L$ is a union of $Y_v$'s.  We have that $\widehat L$ is homeomorphic to a connected sum of $\widehat K$ and $\widehat{L\setminus \overset{\circ}K}$. Hence  we have
$n_P(K)\leq n_P(L)=0$. The last equality comes from the hypothesis, the fact that $L$ is a union of $Y_v$'s and thus $\widehat L$ is a connected sum of prime manifolds in $\mathcal X$ and Milnor's uniqueness theorem.

\end{proof}

It follows immediately that $\phi$ is colour-preserving and that for every $k$ the numbers $n_k(T,f)$ and $n_{P_k}(M)$ coincide.

\subsection{Proof of Theorem \ref{theo:construction} }
\label{subsec:theo:construction}

\begin{proof} 
It suffices to prove  the case when $\cX=\{P_1,\ldots,P_k\}$, the general case following easily by adding a finite connected sum of  the $Q_i$'s. In order to follow our convention we add $S^3$ to the family $\cX$ by setting $P_0=S^3$.

Let $\cC$  be the Cantor set which we can identify, by abuse of language, to $\{0,1\}^\NN$. Let $T_0$ be the regular dyadic rooted tree with $E(T)=\cC$. The vertices of $T_0$ are parametrised by finite sequences of 0's and 1's.

We identify $E$ with a closed subset of $\cC$ and let $T$ be the subtree of $T_0$ whose vertices are finite prefixes of elements of $E$.
To each vertex $v$ of $T$ we associate a finite set of integers $F_v$ which is the set of $i\in\{1,\ldots,k\}$ such that $v$ is a prefix of some element of $E_i$.

Form a tree $T'$ by attaching to each vertex $v$ of $T$ a finite tree $T'_v$ with as many vertices as elements of $F_v$. Observe that the obvious retraction from $T'$ to $T$ induces a homeomorphism between $E(T)$ and $E(T')$, so that we can identify these two spaces.

We define a colouring $f$ on $T'$ as follows: for each $v\in T$, set $f(v)=0$, and send the vertices of $T'_v$ bijectively to $F_v$. Let $M$ be the connected sum $M(T,f)$. From Proposition~\ref{prop:invariants} there exists a homeomorphism $\phi : E(M) \to E(T)$ such that for every $i\in\{1,\ldots,k\}$ we have $\phi(E_{P_k}(M))=E_{k}(T', f)=E_k$.
\end{proof}

\section{Proof of Theorem~\ref{thm:tree}}\label{sec:proof of 2.3}

\begin{lem}\label{lem:AB}
Let $M$ be a connected open oriented 3-manifold which is decomposable over the class of all closed 3-manifolds. Let $A \subset M$ be  a connected compact submanifold with spherical boundary such that no component of $M\setminus A$ is bounded.

Then there is a compact connected submanifold $B \subset M$ with spherical boundary  such that:
\begin{enumerate}
\item $A$ is contained in the interior of $B$.
\item Every component $U$ of $M\setminus B$ is unbounded and has connected boundary. 
\item If $C$ is the closure of a component of $B\setminus A$, then $C$ is a punctured $3$-sphere and $\bord C$ has exactly one component contained in $\bord B$.
\end{enumerate}
\end{lem}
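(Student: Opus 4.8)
The goal is to enlarge $A$ to a compact submanifold $B$ with spherical boundary so that (i) $A \subset \Int B$, (ii) the complementary components of $B$ are unbounded with connected boundary, and (iii) the region $B \setminus A$ is a disjoint union of punctured spheres, each meeting $\bord B$ in exactly one sphere. The plan is to work component-by-component on $M \setminus A$. Fix a component $U$ of $M \setminus A$; since $A$ has spherical boundary and no component of $M\setminus A$ is bounded, $\overline U$ is an unbounded manifold whose frontier $\fr U \subset \bord A$ is a nonempty disjoint union of $2$-spheres, say $\Sigma_1,\dots,\Sigma_r$. If $r=1$ there is nothing to do near $U$; the issue is to ``merge'' the $r$ boundary spheres into one. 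To this end I would choose, inside $\overline U$, a large compact connected submanifold $P$ with spherical boundary containing $\Sigma_1 \cup \dots \cup \Sigma_r$ in its interior and such that every component of $U \setminus P$ is unbounded — this uses decomposability of $M$ (equivalently, the existence of a spherical exhaustion, as discussed in Section~\ref{sec:preliminaries}) together with the fact that $\overline U$, being split off from $M$ along spheres, is itself decomposable over the class of closed $3$-manifolds.

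The key local move is the following: given $P$ as above with $\bord P = (\Sigma_1 \cup \dots \cup \Sigma_r) \sqcup (\text{outer spheres } \Sigma'_1,\dots,\Sigma'_s)$, I want to attach to $A$ a collar-like region that turns $\Sigma_1 \cup \dots \cup \Sigma_r$ into a single sphere. Concretely, I would pick embedded arcs in $P$ joining the $\Sigma_i$ in a tree pattern and take a regular neighbourhood $N$ of $(\Sigma_1 \cup \dots \cup \Sigma_r) \cup (\text{arcs})$; then $A \cup N$ has, on the $U$ side, replaced the $r$ spheres $\Sigma_i$ by one sphere $\widehat\Sigma$ (the boundary connected sum of $r$ copies of $S^2$ is $S^2$), and the closure of $(A\cup N)\setminus A$ is a punctured sphere with exactly one boundary sphere ($\widehat\Sigma$) on the new outer boundary — matching (3). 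After doing this for every component $U$ of $M \setminus A$ (using local finiteness of a spherical exhaustion to see that only finitely many $\Sigma_i$ meet any compact set, so the construction is legitimate and the result is a manifold), I set $B$ to be $A$ together with all these attached neighbourhoods. By construction $A \subset \Int B$, $B$ has spherical boundary, and each complementary component of $B$ is a component of some $U \setminus N$, hence unbounded; shrinking/enlarging slightly I can also arrange each such component to have connected boundary, which is exactly (2).

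The main obstacle I anticipate is (2) — arranging that \emph{every} component of $M \setminus B$ has connected boundary, not merely that the $\Sigma_i$ over a fixed $U$ get merged. After the first merging step a complementary component $U'$ of the new region could still be bounded by several of the outer spheres $\Sigma'_j$ of $P$, so a single pass does not suffice; one expects to need either an iteration (merging, then looking at the new complementary components, merging again) with a termination/convergence argument, or a cleverer single choice of $P$ and of the arcs that controls the outer boundary too. The clean way is probably to choose $P$ so large (possible by the spherical exhaustion, pushing the outer spheres far out) that each component of $U\setminus P$ already has connected frontier — i.e., to absorb the multiplicity of outer spheres into $P$ itself before attaching the arc-neighbourhood. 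I would also need the standard fact that a connected compact $3$-manifold with spherical boundary that embeds in $\overline U$ with connected complement-boundary, together with the arcs, yields a punctured-sphere region; this is where one invokes that the relevant pieces are forced to be punctured $S^3$'s — here I would use that $B\setminus A$ lies in a neighbourhood built from spheres and arcs (hence handlebody-of-genus-zero-like) so its closure $C$ has $\widehat C \cong S^3$, giving (3). Routine verifications (that unions remain manifolds, local finiteness, that capping off behaves well) I would not spell out in detail.
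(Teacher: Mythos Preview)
Your core move—connecting the boundary spheres of each component $U$ of $M\setminus A$ by arcs and attaching a regular neighbourhood—is exactly what the paper does. But you have overcomplicated it and thereby manufactured an obstacle that is not there.

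You do not need the auxiliary submanifold $P$, and you do not need decomposability at all. In each component $U$ with $\fr U = \Sigma_1\cup\dots\cup\Sigma_r$, simply choose properly embedded arcs $\gamma_1,\dots,\gamma_{r-1}$ in $\overline U$ with $\gamma_j$ running from $\Sigma_j$ to $\Sigma_{j+1}$, and let $C$ be a small regular neighbourhood in $\overline U$ of $\Sigma_1\cup\dots\cup\Sigma_r\cup\gamma_1\cup\dots\cup\gamma_{r-1}$ (a collar when $r=1$). Since the $\Sigma_j$ lie on $\bord\overline U$, this $C$ is an $(r{+}1)$-punctured $3$-sphere whose boundary consists of $\Sigma_1,\dots,\Sigma_r$ together with \emph{exactly one} new sphere $S\subset U$. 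Hence $U\setminus C$ has frontier the single sphere $S$, is connected (removing tubular neighbourhoods of arcs does not disconnect a $3$-manifold), and is unbounded (as $C$ is compact and $U$ is not). So condition~(2) holds automatically: there are no ``outer spheres $\Sigma'_j$'' to merge and no iteration is required. Setting $B=A\cup\bigcup_U C_U$—a finite union, since $\bord A$ has finitely many components—already satisfies (1)--(3). This is the paper's entire proof; the decomposability hypothesis is never used.
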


\begin{proof}
Let $U_1,\ldots,U_s$ be the components of $M\setminus A$.

For every $1\le i \le s$ we construct a submanifold $C_i$ as follows. If $U_i$ has connected boundary, then we let $C_i$ be a closed collar neighbourhhood of $\bord U_i$ in $U_i$. Otherwise, let $\Sigma_1, \dots ,\Sigma_t$ be the components of $\bord U_i$. For each $1\le j\le t-1$ pick a smooth arc $\gamma_j$ properly embedded in $\overline U_i$ and connecting $\Sigma_j$ to $\Sigma_{j+1}$. Choose a small closed tubular neighbourhood $C_i$ in $\overline{U}$ of $\Sigma_1\cup\dots\cup\Sigma_t\cup\gamma_1\dots\cup\gamma_{t-1}$. Then $C_i$ is a punctured $3$-sphere and its boundary is the union of all $\Sigma_i$'s and some $2$-sphere $S\subset U$.

Doing this for every $i$, we set $B=A\cup \bigcup_{1\le i\le s} C_i$.
\end{proof}

We can now prove Theorem~\ref{thm:tree}.
\begin{proof}
The decomposability hypothesis implies that there exists an exhaustion $(A_n)$ of $M$ by compact connected $3$-submanifolds with spherical boundary, such that each capped-off manifold $\widehat A_n$ is a finite connected sum of members of $\cX$. Without loss of generality, we assume that for every $n$, each component of $M\setminus A_n$ is unbounded. Applying Lemma~\ref{lem:AB} to each $A_n$ and possibly extracting a subsequence and reindexing, we get an exhaustion $\{B_n\}$ by submanifolds with spherical boundary with the following properties:

\begin{enumerate}
\item For each $n$, the submanifold $A_n$ is contained in the interior of $B_n$.
\item For each $n$, every component of $M\setminus B_n$ is unbounded and has connected boundary.
\item If $C$ is the closure of a component of $B_n\setminus A_n$ for some $n$, then $C$ is a punctured $3$-sphere and $\bord C$ has exactly one component contained in $\bord B_n$.
\end{enumerate}

It follows that for every $n$, the capped-off manifold $\widehat B_n$ is a connected sum of $\widehat A_n$ with a finite number of $S^2 \times S^1$ factors.  

Let $\cS$ be the collection of all components of the boundaries of all
$B_n$'s. Then $\cS$ is a locally finite collection of pairwise disjoint, separating, embedded 2-spheres in $M$. The dual graph $T$ of $\cS$ is therefore a tree. It is now easy to attach a finite tree to each vertex of $T$ and get a tree $T'$ such that $M$ is isomorphic to $M(T,f)$ for a suitable colouring $f$,  arguing as in the end of the proof of Theorem~\ref{theo:construction}.
\end{proof}

\section{Proof of the Main Theorem}\label{sec:proof of main}
 
\subsection{Setting up the proof}
Let $\{P_i\}_{0\le k \le r}$ be a finite family of closed oriented 3-manifolds with $P_0=S^3$, $P_1=S^2\times S^1$, and all other $P_k$'s are prime, not isomorphic to $P_1$ and pairwise nonisomorphic. For simplicity we denote the invariants $n_{P_k}(M)$ by $n_k(M)$ and $E_{P_k}(M)$ by $E_k(M)$, when $M$ is a $3$-manifold.

Let $M,M'$ be two open connected oriented manifolds which are decomposable over this family. We suppose that $n_k(M)=n_k(M')$ for all $0<k\le r$, and that there is a colour-preserving homeomorphism 
$\phi : E(M)\longrightarrow E(M')$. We fix once and for all such a homeomorphism. We want to construct an orientation-preserving diffeomorphism (henceforth called an \bydef{isomorphism}) from $M$ to $M'$.

\begin{defi}
Let $A$ be a compact, connected submanifold of $M$ with spherical boundary. We say that a component $U$ of $M\setminus A$  is \bydef{good} if the following properties are satisfied:
\begin{enumerate}\label{list:properties}
 \item The set $U$ is unbounded and has connected boundary.
   \item For every $0<k\le r$, the number $n_k(U)$ is either 0 or $+\infty$.
\end{enumerate}
The set A is said to be  \bydef{good}  if every component of $M\setminus A$ is good  and a spherical exhaustion $\{A_n\}$ is \bydef{good} if each $A_n$ is good.
\end{defi}

Notice that if $A$ is good, then each component of $\bord A$ is separating. Also, for every $k$, if $n_k(M)$ is finite, then $n_k(M\setminus A)=0$. In other words, every prime factor which appears only finitely many times in the prime decomposition of $M$ is contained in $A$.

The idea is inspired by \cite{Rich} (cf~also \cite{Ker}) and consists in constructing two good exhaustions $(A_n)$ of $M$  and $(A'_n)$ of $M'$ together with isomorphisms $\Phi_n : A_n \longrightarrow A'_n$ such that $\Phi_{n+1\vert A_n}=\Phi_n$ and satisfying the following additional property: for every $n$, every component $U$ of $M\setminus A_n$ and every component $U'$ of $M'\setminus A'_n$,  if $\Phi_n (\partial U)=\partial U'$ then $\phi (U^*)=U'^*$. It is then clear that  $\Phi = \lim_{n\to \infty}\Phi_n$ is the desired map.

\subsection{Existence of a good exhaustion} 

\begin{lem}\label{lem:good exhaustion}
The manifold $M$ admits a good exhaustion.
\end{lem}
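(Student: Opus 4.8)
The plan is to start from an arbitrary spherical exhaustion of $M$ — which exists because $M$ is decomposable over the finite family $\{P_i\}$, by the definition of decomposability — and then upgrade it in two stages to make each piece \emph{good}. First I would apply Lemma~\ref{lem:AB} to each term of the exhaustion (and pass to a subsequence) to arrange that every component of the complement is unbounded and has connected boundary; this disposes of property (1) in the definition of good. The remaining issue is property (2): for each $k$ with $0<k\le r$ we must ensure that $n_k(U)\in\{0,+\infty\}$ for every complementary component $U$.

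The key observation is the dichotomy already recorded after the definition of $n_P(e)$ (the Lemma stating $n_P(e)\in\{0,\infty\}$): along a nested sequence $U_1\supset U_2\supset\cdots$ of neighbourhoods of a fixed end, the nonincreasing sequence $n_k(U_i)$ is eventually $0$ or constantly $+\infty$. So for a fixed $k$, an end $e$ either lies in $E_k(M)$ (and then $n_k(U)=\infty$ for all neighbourhoods $U$ of $e$) or it has a neighbourhood $U$ with $n_k(U)=0$. Since $E(M)$ is compact and $E_k(M)$ is closed (Lemma~\ref{lem:closed}), the ends not in $E_k(M)$ are covered by finitely many basic open sets $U^*$ with $n_k(U)=0$; pulling these back, and taking $A_n$ large enough in the exhaustion, I can force every component $U$ of $M\setminus A_n$ to either satisfy $n_k(U)=0$ or to meet $E_k(M)$ — and in the latter case $n_k(U)=\infty$ automatically, because $U$ is then a neighbourhood of some end in $E_k(M)$. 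Doing this simultaneously for the finitely many values of $k$ (intersecting finitely many conditions, each of which only requires $A_n$ to contain a fixed compact set) and then re-applying Lemma~\ref{lem:AB} to restore property (1) without destroying property (2) — note that enlarging $A$ can only decrease each $n_k(U)$, hence cannot turn a $0$ into something positive, and a component containing an end of $E_k(M)$ keeps $n_k=\infty$ — gives the desired good exhaustion after a final diagonal extraction.

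Concretely the steps are: (i) fix a spherical exhaustion $(A_n)$ with all complementary components unbounded; (ii) for each $k\in\{2,\dots,r\}$ use the $\{0,\infty\}$ dichotomy plus compactness of $E(M)$ to produce a compact set $L_k\subset M$ such that any compact $A\supset L_k$ with spherical boundary has the property that each component $U$ of $M\setminus A$ with $n_k(U)\ne 0$ is a neighbourhood of an end in $E_k(M)$; (iii) pass to a subsequence of $(A_n)$ with $A_n\supset L_2\cup\cdots\cup L_r$; (iv) apply Lemma~\ref{lem:AB} to each such $A_n$, check that the new $B_n$ still contains $L_2\cup\cdots\cup L_r$ and that enlarging cannot create components violating (2), and relabel $B_n$ as $A_n$; (v) conclude $(A_n)$ is a good exhaustion.

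The main obstacle is step (ii)–(iv): making precise that the conditions "$n_k(U)=0$ for the $U$'s away from $E_k(M)$" are \emph{stable under enlargement} and can be imposed by a single compactness argument rather than an infinite process, and checking that the clean-up via Lemma~\ref{lem:AB} at the end does not reintroduce a bad component. The subtlety is that Lemma~\ref{lem:AB} may fuse or subdivide complementary components, so I must argue at the level of ends: a component $U$ of $M\setminus A_n$ is "good for $k$" precisely when the set of ends $U^*\subset E(M)$ either misses $E_k(M)$ entirely (forcing $n_k(U)=0$ once $A_n$ is large) or is contained in it on that component — and this end-level statement is manifestly preserved when we replace $A_n$ by a larger $B_n\supset A_n$. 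Once that is nailed down, the rest is routine bookkeeping with nested exhaustions.
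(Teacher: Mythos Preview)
Your overall strategy matches the paper's: use the $\{0,\infty\}$ dichotomy at each end, invoke compactness of $E(M)$ to get finitely many neighbourhoods, then clean up disconnected boundaries via Lemma~\ref{lem:AB}. However, step~(ii) as you have formulated it is too strong and in fact false. You claim there is a compact $L_k$ such that \emph{every} compact $A\supset L_k$ with spherical boundary has only complementary components $U$ with $n_k(U)\in\{0,\infty\}$. Here is why this fails: take the tree with a ray $v_0,v_1,\dots$ (all coloured $P$) toward an end $e\in E_P(M)$, and at each $v_i$ attach a ray $w_{i,1},w_{i,2},\dots$ toward an end $f_i\notin E_P(M)$, coloured so that the single vertex $w_{i,2^i}$ carries $P$ and all other $w_{i,j}$ carry $S^3$. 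For the natural exhaustion $K_n=\{v_i:i\le n\}\cup\{w_{i,j}:i,j\le n\}$, the component of $M\setminus K_n$ toward $f_i$ has $n_P=1$ whenever $\log_2 n< i\le n$; thus for \emph{every} $n$ there are complementary components with $n_P=1$ whose end-set misses $E_P(M)$. No choice of $L_P$ cures this, because the stray $P$'s recede faster than any fixed exhaustion can absorb them. Your end-level stability remark (``$U^*\cap E_k(M)=\emptyset$ forces $n_k(U)=0$ once $A_n$ is large'') is correct for a \emph{fixed} end but fails uniformly: as $A_n$ grows, new offending components keep appearing.

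The paper avoids this trap by not trying to enlarge a pre-given exhaustion at all. Instead it proves the following claim directly: given any bounded $A$, there is a good $B\supset A$. For each end $e^*$ one chooses a basic neighbourhood $V(e^*)$ (from a fixed spherical exhaustion, hence pairwise nested or disjoint) with $n_k(V(e^*))\in\{0,\infty\}$ for \emph{all} $k$ simultaneously and with $V(e^*)\cap\overline A=\emptyset$; compactness of $E(M)$ gives a finite subcover $V(e_1^*),\dots,V(e_p^*)$, and one \emph{defines} $B=M\setminus\bigcup V(e_j^*)$. The point is that the components of $M\setminus B$ are then precisely the maximal $V(e_j^*)$'s themselves, which already satisfy the $\{0,\infty\}$ condition by construction; there is no need to argue stability under further enlargement. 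Only afterwards is the connected-boundary condition restored (by the trick of Lemma~\ref{lem:AB}), and since that step attaches punctured $3$-spheres inside the existing complementary components, it does not alter any $n_k$. Iterating this claim gives the good exhaustion. Your proof becomes correct if you replace the ``$L_k$ works for all larger $A$'' mechanism by this direct construction of $B$.
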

 
\begin{proof}
This follows by iteration from the following claim:
\begin{claim}
For every bounded subset $A\subset M$, there exists a good submanifold $B$ such that $A$ is contained in the interior of $B$.
\end{claim}

To prove the claim, let $\{K_n\}$ be an ancillary spherical exhaustion of $M$. Without loss of generality we assume that for every $n$, each component of $M\setminus K_n$ is unbounded, and denote these components by
$U_{n,1},\dots, U_{n,s(n)}$. The sets $U^*_{n,i}$ form a basis for the topology of $E(M)$. Any two of these sets are either disjoint or contained in one another.  Every end $e^* \in  E(M)$ is represented by a pre-end $e=V_1\supset V_2\supset\dots$ where the $V_j$ are taken among the $U_{n,i}$'s (see Appendix \ref{sec:ends}).

For any $e^*\in E(M)$ with $e=V_1\supset V_2\supset\dots$ there exists $N\in \textbf{N}$ such that for any $0<k\le r$, we have either
  \begin{itemize}
    \item $n_k(V_i)=0,\quad\forall i\geq N$, (if $e^*\notin E_P(M)$), or
    \item $n_k(V_i)=+\infty$ for all $i$, (if $e^*\in E_P(M)$).
  \end{itemize}
  
For every end $e^*$ we choose such a number $N$ large enough so that $V_N\cap \overline A=\emptyset$, and set $V(e^*):=V_N$. The collection $(V(e^*)^*)_{e^*\in E(M)}$ is an open covering of the compact space $E(M)$. Therefore there exists a finite collection of ends $e^*_1, \dots ,e^*_p$ such that $V(e^*_1)^*\cup \dots \cup V(e^*_p)^*=E(M)$. We set
$$B=M\setminus (V(e^*_1)\cup \dots \cup V(e^*_p))\,.$$

By construction, $B$ is a compact, connected submanifold of $M$ with spherical boundary and contains $A$ in its interior. Moreover, for every component $U$ of $M\setminus B$, the open set $U$ is unbounded and we have $n_k(U)\in \{0,\infty\}$ for all $0<k\le r$.

If for some $U$ the boundary of $U$ is disconnected, argue as in the proof of Theorem~\ref{thm:tree} and add something to $B$ to solve the problem. Then $B$ is good.
\end{proof}

\subsection{Proof of the Main Theorem}

By Lemma~\ref{lem:good exhaustion}, we have good exhaustions $\{B_n\}$ of $M$ and $\{B'_n\}$ of $M'$.

Our goal is to construct by induction two good exhaustions $\{A_n\}$ of $M$ and $\{A'_n\}$ of $M'$ together with a sequence of isomorphisms $\Phi_n:A_n\to A'_n$ with the following properties:

\begin{enumerate}
\item For every $n$, the isomorphism $\Phi_{n+1}$ is an extension of $\Phi_n$.
\item For every $p\le n$, every component $U$ of $M\setminus A_p$ and every component $U'$ of $M'\setminus A'_p$, if $\Phi_p(\bord U)=\bord U'$ then $\phi(U^*)=(U')^*$.
\end{enumerate}

We denote by $(H_n)$ the $n$-th induction step.

The first step of the induction is to show that there exist good subsets $A_0\subset M$ and $A'_0\subset M'$ which are isomorphic and satisfying  assertion 2 above. Since the argument is similar to part of the induction step, we postpone it until the end of the proof.

We do the construction for $n$ even and for $n$ odd we exchange the roles played by $M$ and $M'$.  Therefore, for $n$ even, we assume that the construction of $A_n$, $A'_n$ and $\Phi_n$ has been done and we proceed to the next step. We set $A'_{n+1}=B'_m$, for $m$ large enough so that $\overset{\circ}{A'_{n+1}} \supset A'_n$. Our goal is now to define $A_{n+1}$ together with $\Phi_{n+1} : A_{n+1}\to  A'_{n+1}$ satisfying properties $1$ and $2$.  The first step is to prove that  for $m$ large enough $B_m$ contains a submanifold isomorphic to $A'_{n+1}$.

For a connected component $S$ of  $\partial A_n$, we denote by $U_S$ the component of $M \setminus A_n$ bounded by $S$, and by $U'_{S}$ the component of $M' \setminus A'_n$ bounded by $S':=\Phi_n(S)$. The map $U_S \mapsto U'_{S}$ is a bijection between the set of components of $M \setminus A_n$ and the set of components of $M' \setminus A'_n$. Similarly, for every $m$ large enough so that $\overset{\circ}{B_m } \supset A_n$, we denote by 
$C_{m,S}$ the component of $B_m \setminus \overset{\circ}{A_n}$ such that $C_{m,S} \cap A_n=S$, that is $C_{m,S}=\overline{U_S} \cap B_m$.  Similarly, $C'_{S}$ is  the component of $A'_{n+1} \setminus \overset{\circ}{A'_n}$ such that $C'_{S} \cap A'_n=S'$, that is $C'_{S}=\overline{U'_{S}} \cap A'_{n+1}$.  The map $C_{m,S} \mapsto C'_{S}$ is again a bijection  between the set of components of $B_m \setminus \overset{\circ}{A_n}$ and the set of components of $A'_{n+1} \setminus \overset{\circ}{A'_n}$.

\begin{lem} There exists $m_0 \in \NN$ such that for any integer $m \geq m_0$, for any  $0<k\le r$,  for any connected component $S$ of $\partial A_n$, we have
$$n_k(C_{m,S})\geq n_k(C'_{S})\,.$$
\end{lem}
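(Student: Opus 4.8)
The plan is to compare the finite connected sum decompositions of $\widehat{C_{m,S}}$ and $\widehat{C'_S}$ and argue that, for $m$ large, the former contains at least as many $P_k$-summands as the latter, for every $k$. First I would observe that since $A'_{n+1}=B'_{m'}$ is good, each component $C'_S$ of $A'_{n+1}\setminus\overset{\circ}{A'_n}$ is a compact submanifold with spherical boundary, so $n_k(C'_S)$ is a well-defined integer; it is finite because $C'_S$ is compact. The key point is that the prime factors of $\widehat{C'_S}$, other than $S^2\times S^1$, which appear in $C'_S$ already ``live at infinity'' in the good piece $U'_S$, hence their counterparts live at infinity in $U_S$, hence eventually inside $C_{m,S}$.

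The central step is therefore the following. Fix $k$ with $0<k\le r$ and a component $S$ of $\bord A_n$. If $n_k(U'_S)=0$ then by goodness $n_k(C'_S)=0$ (every prime factor appearing finitely often is contained in $A'_n$, as noted just after the definition of good), and there is nothing to prove. If $n_k(U'_S)=+\infty$, then since $\Phi_n(S)=S'$ and the induction hypothesis $(H_n)$ gives $\phi(U_S^*)=(U'_S)^*$, and since $\phi$ is colour-preserving, the end-set $E_k(U_S)$ is nonempty, so $n_k(U_S)=+\infty$ as well. In particular $U_S$ contains, for each $N$, a compact submanifold $L_N$ with spherical boundary and $n_k(L_N)\ge N$; enlarging $L_N$ inside $U_S$ to a connected union of pieces we may assume $L_N$ is connected with spherical boundary. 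Since $\{B_m\}$ is an exhaustion, for $m$ large enough $L_N\subset B_m$, and because no component of $M\setminus B_m$ is bounded one checks that one can take $L_N\subset C_{m,S}=\overline{U_S}\cap B_m$ (any piece of $L_N$ would otherwise have to escape to an unbounded component of $M\setminus B_m$, impossible since $L_N$ is compact and contained in $U_S$). Taking $N=n_k(C'_S)$ and using that adding disjoint pieces adds up the $n_k$-invariants (the additivity used in the proof of Lemma on p.~3), we get $n_k(C_{m,S})\ge n_k(L_N)\ge N=n_k(C'_S)$.

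To conclude, note that $\bord A_n$ has only finitely many components (it is a compact manifold), and there are only finitely many values of $k$, so one may choose a single $m_0$ that works simultaneously for all pairs $(k,S)$: take the maximum over this finite set of the thresholds produced above, together with a lower bound ensuring $\overset{\circ}{B_m}\supset A_n$. For $m\ge m_0$ the inequality $n_k(C_{m,S})\ge n_k(C'_S)$ holds for every $k$ and every $S$, as required.

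\textbf{Main obstacle.} The delicate point is the claim that the compact submanifolds $L_N\subset U_S$ realising large values of $n_k$ can be pushed inside $C_{m,S}=\overline{U_S}\cap B_m$ rather than merely inside $B_m$; this rests on the fact that $U_S$ is one of the unbounded components of $M\setminus A_n$ and that, for $m$ large, $B_m\cap U_S$ is connected and contains any prescribed compact subset of $U_S$ — which in turn uses that every component of $M\setminus B_m$ is unbounded (goodness of $B_m$) so that no piece of $L_N$ can be cut off. Making this topological bookkeeping precise, and in particular the passage from ``$n_k(U_S)=\infty$'' to ``a single connected compact $L_N\subset U_S$ with $n_k(L_N)\ge N$'', is where the real work lies; the additivity of $n_k$ under disjoint union and the colour-preserving property of $\phi$ are then used essentially verbatim as in the earlier proofs.
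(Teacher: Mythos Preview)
Your proof is correct and follows essentially the same route as the paper's: use goodness of $A'_n$ to get $n_k(C'_S)>0 \Rightarrow n_k(U'_S)=\infty$, transfer this to $n_k(U_S)=\infty$ via $(H_n)$ and the colour-preserving property of $\phi$, then exhaust $U_S$ by the $C_{m,S}$ and take $m$ large; finish by finiteness of the set of pairs $(k,S)$. One remark: your ``main obstacle'' is not really an obstacle. Since by definition $C_{m,S}=\overline{U_S}\cap B_m$, any compact $L_N\subset U_S$ satisfies $L_N\subset B_m$ for $m$ large and hence $L_N\subset U_S\cap B_m\subset C_{m,S}$ directly---no connectedness argument or appeal to goodness of $B_m$ is needed here, and no ``pushing'' is required.
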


\begin{proof}
Let $k$ and $S$ be such that  $n_k(C'_{S}) >0$. Using the inclusion $C'_{S} \subset \overline{U'_{S}}$ and the fact that $A'_n$ is good, it follows that $ n_{U'_{S}}(P)=+\infty$, which in turn implies that  $(U'_S)^*\cap E_k(M')\neq\emptyset$.

By the induction assumption $(H_n)$, the corresponding component $U_S$ of $M\setminus A_n$ satisfies $\phi (U_S^*)=(U'_{S})^*$ and, since we also have $\phi(E_k(M))=E_k(M')$, we deduce that  $U_S^*\cap E_k(M)\neq\emptyset$, which implies  $ n_k(U_S)=+\infty$.
Since $C_{m,S}=\overline{U_S}\cap B_m$,  we have $n_k(U_S)=\lim_{m \to \infty} n_k(C_{m,S})$.
Hence, for all $m$ large enough depending on $S$ and $k$,  we have $n_k(C_{m,S}) \geq n_k(C'_{S})$.

Since there are only finitely many $S$ and $k$, we can choose $m$ large enough so that this inequality holds for all of them.
\end{proof}

Next, for $m$ large enough, we will define $A_{n+1}$ by adding to $A_n$ a piece of $C_{m,S}$ for each $S$. Let $(U_{i,m})$ be the collection of connected components of  $M\setminus B_m$ and $(V'_j)$ the collection of connected components of $M'\setminus A'_{n+1}$.

\begin{lem} There exists $m_1\in \NN$ such that for every integer $m \geq m_1$, for each component $U_{i,m}$ of $M\setminus B_m$, there exists a component $V'_j$ of $M'\setminus A'_{n+1}$ such that $\phi (U^*_{i,m})\subset V'^*_j$.
\end{lem}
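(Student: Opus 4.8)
The plan is to exploit the compactness of $E(M)$ together with the fact that the homeomorphism $\phi$ matches the end space of $M$ with that of $M'$, and that $\{B_m\}$ and $A'_{n+1}=B'_{m'}$ refine the topology of these end spaces. First I would recall that the sets $(U_{i,m}^*)_m$, as $m$ varies and $i$ ranges over the components of $M\setminus B_m$, form a basis for the topology of $E(M)$ consisting of sets that are pairwise nested or disjoint, and similarly the finitely many sets $(V'_j)^*$ (for fixed $n+1$) form an open partition of $E(M')$ into clopen pieces. Pulling back via $\phi$, the sets $\phi^{-1}((V'_j)^*)$ form a finite clopen cover (in fact partition) of the compact space $E(M)$.

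Next I would argue as follows: for each end $e^*\in E(M)$, the point $\phi(e^*)$ lies in exactly one $(V'_{j(e^*)})^*$, which is open; since $\phi$ is continuous and the $(U_{i,m}^*)$ form a neighbourhood basis, there is some $m(e^*)$ and a component $U$ of $M\setminus B_{m(e^*)}$ with $e^*\in U^*$ and $\phi(U^*)\subset (V'_{j(e^*)})^*$. These $U^*$ cover $E(M)$; by compactness finitely many of them, say $W_1^*,\dots,W_q^*$, suffice, coming from levels $m^{(1)},\dots,m^{(q)}$. Taking $m_1=\max_\ell m^{(\ell)}$ and using the nesting property of the basis (any component $U_{i,m}$ with $m\ge m_1$ is contained in one of the $W_\ell$), I get that for every $m\ge m_1$ and every component $U_{i,m}$ of $M\setminus B_m$, the image $\phi(U_{i,m}^*)$ is contained in $\phi(W_\ell^*)\subset (V'_{j})^*$ for a suitable $j$. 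This is exactly the asserted conclusion.

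One technical point I would be careful about: I need $U_{i,m}^*$ to be a \emph{nonempty} open set of $E(M)$, i.e.\ that every component of $M\setminus B_m$ is unbounded — but this is guaranteed since $B_m$ is a good submanifold. I also need that $\phi(W_\ell^*)\subset(V'_{j})^*$ passes to subsets, which is immediate. The main obstacle, such as it is, is purely bookkeeping: making the nesting argument precise so that increasing $m$ past $m_1$ only subdivides the $W_\ell^*$ rather than creating components straddling two of them — this follows from the fact that the $U_{i,m}$ at different levels are either nested or disjoint, a property recorded in the discussion of spherical exhaustions and the appendix on ends. No new geometric input is needed; everything reduces to compactness and the basis property of $\{U_{i,m}^*\}$.
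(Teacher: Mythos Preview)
Your proposal is correct and follows essentially the same argument as the paper: use that the $U^*_{i,m}$ form a basis of $E(M)$, pull back the finite open cover $\{V'^*_j\}$ via $\phi$, pick for each end a basis element inside the pullback, extract a finite subcover by compactness, take $m_1$ to be the maximum level, and conclude by the nesting property of the $U_{i,m}$. Your additional remarks (that the $V'^*_j$ form a clopen partition, that goodness ensures each $U^*_{i,m}$ is nonempty) are accurate but not strictly needed for the argument.
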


\begin{proof}
The collection of open sets $(U^*_{i,m})$, for $m\geq m_0$, is a basis for the topology of $E(M)$. The  sets $V'^*_j$ form an open cover of $E(M')$, so the sets $\phi^{-1}(V'^*_j)$ form an open cover of $E(M)$.
Let $e^*\in E(M)$. There exists $j$ such that $e^*\in\phi^{-1}(V'^*_j)$. Since $(U^*_{i,m})$ is a basis for the topology, there exist $i$ and $n\geq m_0$ such that
$$e^*\in U^*_{i,n}\subset \phi^{-1}(V'^*_j)\,.$$
We choose such a $U^*_{i,n}$ and call it $U(e^*)$. Since $E(M)$ is compact, from the collection $(U(e^*))_{e^*\in E(M)}$ we extract a finite covering of $E(M)$,
$$E(M)\subset U^*_{i_1,n_1}\cup \dots \cup U^*_{i_k,n_k}\,.$$
Now, we set $m_1=\max_j\{n_j\}$. For every $m\geq m_1$, each $U_{i,m}$ is included in one $U_{i_s,n_s}$ hence, for each $m\geq m_1$ and for each component $U_{i,m}$ of $M\setminus B_m$, there exists a component $V'_j$ of $M'\setminus A'_{n+1}$ such that $\phi (U^*_{i,m})\subset V'^*_j$.  
\end{proof}

%Let us consider a component $S$ of $\partial A_n$ and recall that $C'_{S'}$ is the component of $A'_{n+1}\setminus\overset{\circ}{A'_n}$  whose intersection with $A'_n$ is the sphere $S'=\Phi_n(S)$. Thus $\partial C'_{S'}$ consists of finitely many $2$-spheres $S', S'_1, \dots,S'_\ell$ where $S'_j\subset \partial A'_{n+1}$, each separating $M'$. We relabel by $V'_j$ the component of $M'\setminus A'_{n+1}$ bounded by $S'_j$.  Notice that $\{V'^*_1,\ldots, V'^*_\ell\}$ is  a  partition of $(U'_{S'})^\ast$. 
% hence $\{\phi^{-1}(V'^*_1),\ldots, \phi^{-1}(V'^*_\ell\})\}$ is a partition of $U_S^*$. 
%Our goal is now to find a submanifold $C_S \subset C_{m,S}$ isomorphic to $C'_{S'}$, with among properties that its boundary spheres contained in $U_S$ induces a partition of $U_S^\ast$ sended by $\phi$ to the above one.

\begin{lem}\label{lem:extension}
For each integer $m \geq m_1$, for every $2$-sphere $S$, there exists a submanifold $C_S \subset C_{m,S}$ with spherical boundary such that the following properties hold:
  \begin{enumerate}[\indent (1)]
    \item Each boundary sphere of $\partial C_S$ separates $M$.
    \item There exists an isomorphism $\Phi_{n+1, S} : C_S \to C'_{S}$ whose restriction to $S$ coincides with $\Phi_n$.
    \item Any component $U$ of $M\setminus C_S$  not bounded by $S$  is good. Furthermore, if $V'$ is the component of $M'\setminus C'_{S}$ bounded by $\Phi_{n+1, S} (\partial U)$, then 
    $\phi (U^*)=V'^*$.
  \end{enumerate}
\end{lem}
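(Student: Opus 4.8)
The plan is to prove Lemma~\ref{lem:extension} by analysing the finitely many pieces $C'_S$ of the good manifold $A'_{n+1}$ sitting between $A'_n$ and $\partial A'_{n+1}$, and matching each of them with a submanifold $C_S$ carved out of the much larger piece $C_{m,S} = \overline{U_S}\cap B_m$ of $M$. Fix a boundary sphere $S$ of $A_n$ and write $S' = \Phi_n(S)$. The key structural fact is that $\widehat{C'_S}$ is a punctured prime connected sum: by the construction of the good exhaustion $\{B'_n\}$ via Lemma~\ref{lem:AB}, $C'_S$ is obtained by gluing a punctured $3$-sphere to $S'$ (to make its outer boundary connected) and then adjoining finitely many prime summands; more to the point, $\widehat{C'_S}$ is a connected sum of finitely many members of $\cX$, and by Proposition~\ref{prop:invariants} (applied to the decomposable manifold $U'_S$) exactly $n_k(C'_S)$ of these summands are copies of $P_k$ for each $0<k\le r$. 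The previous two lemmas give us, for $m\ge\max(m_0,m_1)$, both the inequality $n_k(C_{m,S})\ge n_k(C'_S)$ for every $k$ and every $S$, and a compatible assignment of the ends of $M\setminus B_m$ lying ``beyond $S$'' into a single end-neighbourhood $V'_j$ of $M'\setminus A'_{n+1}$.

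The main construction is then the following. Inside $C_{m,S}$, which is a compact connected submanifold of $M$ with spherical boundary containing $S$ as one boundary component, choose a splitting family of $2$-spheres exhibiting $\widehat{C_{m,S}}$ as a connected sum along a finite tree of prime manifolds (plus $S^2\times S^1$'s and $S^3$'s); this is possible since $M$ is decomposable over $\{P_0,\dots,P_r\}$ and we may refine the splitting spheres of $M$ inside $C_{m,S}$. Since $n_k(C_{m,S})\ge n_k(C'_S)$ for all $k$, we may select a subcollection of these prime summands realising exactly the multiplicities of $\widehat{C'_S}$, together with enough $S^2\times S^1$ factors (here is where $P_1 = S^2\times S^1$ and the fact that $\widehat{C'_S}$ carries a definite, finite number of $S^2\times S^1$ summands enter), and take $C_S$ to be a regular neighbourhood in $C_{m,S}$ of $S$ union the cores $Y_v$ of the chosen summands union connecting arcs — exactly the punctured-sphere-plus-summands shape of $C'_S$. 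By construction $\widehat{C_S}$ and $\widehat{C'_S}$ are isomorphic connected sums of the same prime pieces with $\partial C_S$ and $\partial C'_S$ having matching numbers of boundary spheres, one of which ($S$, resp. $S'$) is distinguished; by the Kneser--Milnor uniqueness theorem (and keeping track of orientations, which is routine since every summand is used with its given orientation and the gluing spheres separate) there is an isomorphism $C_S\to C'_S$ which we may isotope to restrict to $\Phi_n$ on $S$. This gives (1) and (2). After possibly enlarging $C_S$ by a collar-and-arcs argument as in Lemma~\ref{lem:AB}, we may also assume every component $U$ of $M\setminus C_S$ not bounded by $S$ is good, giving the first half of (3).

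For the second half of~(3) — matching ends — I would argue as follows. A component $U$ of $M\setminus C_S$ not bounded by $S$ is contained in $U_S$ and hence, because $C_{m,S}\subset C_S$... wait, it is the other way: $C_S\subset C_{m,S}$, so $U$ contains some components of $M\setminus B_m$. Each such component $U_{i,m}$ satisfies, by the previous lemma, $\phi(U_{i,m}^*)\subset V_j'^*$ for a unique $j$; since $U^* = \bigsqcup U_{i,m}^*$ over the finitely many $U_{i,m}\subset U$ together with the part of $U$ between $C_S$ and $B_m$ (which is compact and contributes no ends), $\phi(U^*)$ is a subset of $E(M')$. The remaining point is that $\phi(U^*)$ is exactly one $V'^*$, namely $V'^* = (V')^*$ where $V'$ is the component of $M'\setminus C'_S$ bounded by $\Phi_{n+1,S}(\partial U)$: this is forced because $C'_S$ is a submanifold of the good set $A'_{n+1}$, so the components of $M'\setminus C'_S$ lying beyond $C'_S$ are in bijection with the boundary spheres of $C'_S$ other than $S'$, which correspond via $\Phi_{n+1,S}$ to the boundary spheres of $C_S$ other than $S$, i.e. to the components $U$; a counting/covering argument (every $V_j'^*$ must be hit, and the $\phi(U^*)$ are disjoint and cover $\phi(U_S^*) = (U_S')^*$) then pins down the correspondence. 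The hard part will be this last bookkeeping: ensuring that the combinatorial bijection ``boundary spheres of $C_S$ $\leftrightarrow$ boundary spheres of $C'_S$'' produced by the abstract isomorphism of Kneser--Milnor can be chosen to be compatible with the end-map $\phi$, rather than merely some isomorphism. This likely requires choosing $C_S$ not all at once but adapting it to the end-decomposition supplied by the previous lemma — i.e. building $C_S$ so that its complementary components are grouped according to the sets $V_j'$ — and then invoking uniqueness of prime decomposition on each group separately.
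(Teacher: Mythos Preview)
Your plan has the right overall shape and you correctly diagnose the hard part at the end, but there are two genuine gaps.

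\textbf{Goodness cannot be patched afterwards.} You propose to obtain goodness of the complementary components by ``possibly enlarging $C_S$ by a collar-and-arcs argument as in Lemma~\ref{lem:AB}''. This fails for two reasons: enlarging $C_S$ after you have fixed the isomorphism $C_S\to C'_S$ destroys that isomorphism (and the number of boundary spheres must match $C'_S$ exactly), and the collar-and-arcs trick of Lemma~\ref{lem:AB} only produces connected boundaries --- it does nothing toward the condition $n_k(U)\in\{0,\infty\}$. Goodness has to be built into the construction of $C_S$. In the paper this is achieved by condition~(b): the supernumerary prime summands of $\widehat{C_{m,S}}$ (those not needed to match $\widehat{C'_S}$) are distributed into pieces $N'_j$, and a given $P_k$ is placed into $N'_j$ only when $n_k(V'_j)=\infty$. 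Since $\phi$ is colour-preserving and $\phi(V_j^*)=V_j'^*$, this forces $n_k(U_j)=\infty$ for the corresponding complementary component $U_j$ of $M\setminus C_S$, and goodness follows.

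\textbf{The end-bookkeeping needs a mechanism, not just a wish.} You correctly note that an abstract Kneser--Milnor isomorphism need not pair boundary spheres compatibly with $\phi$, and you suggest ``building $C_S$ so that its complementary components are grouped according to the sets $V'_j$'' --- but you do not say how. The paper's device is to work upstairs rather than bottom-up: one writes $\widehat{C_{m,S}}\cong \widehat{C'_S}\# N'_1\#\cdots\# N'_\ell$ (with the $N'_j$ chosen as above, each attached at the ball $D'_j$ bounded by $S'_j$), picks an isomorphism $f$, and then uses the elementary fact that disjoint $3$-balls can be isotoped anywhere in a connected $3$-manifold to arrange $f(D)=D'$ and $f(D_i)\subset N'_j$ whenever $\partial D_i\in\Gamma_j$. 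One then \emph{defines} $C_S:=f^{-1}(C'_S)$. With this definition the complementary component $U_j$ bounded by $S_j:=f^{-1}(S'_j)$ automatically satisfies $U_j^*=V_j^*$ (the part of $U_j$ inside $B_m$ is bounded), whence $\phi(U_j^*)=V_j'^*$. Your regular-neighbourhood construction could in principle be pushed through, but you would have to simultaneously control the number of outer boundary spheres, route the connecting arcs so that each complementary region contains exactly the boundary spheres of $\Gamma_j$, and place the leftover primes correctly; the top-down pullback via $f$ sidesteps all of this at once.
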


Once this lemma is proved,  for $m=\max\{m_0,m_1\}$,  we  define $A_{n+1}=A_n\bigcup_S C_S$ and we extend $\Phi_n$ into $\Phi_{n+1} : A_{n+1} \to A'_{n+1}$ by setting $\Phi_{n+1}=\Phi_{n+1, S}$ on $C_S$. 

\begin{proof}[Proof of Lemma~\ref{lem:extension}]
Let us consider a component $S$ of $\partial A_n$ and recall that $C'_{S}$ is the component of $A'_{n+1}\setminus\overset{\circ}{A'_n}$  whose intersection with $A'_n$ is the sphere $S'=\Phi_n(S)$. Thus $\partial C'_{S}$ consists of finitely many $2$-spheres $S', S'_1, \dots,S'_\ell$ where $S'_j\subset \partial A'_{n+1}$, each separating $M'$. We relabel by $V'_j$ the component of $M'\setminus A'_{n+1}$ bounded by $S'_j$.  Notice that $\{V'^*_1,\ldots, V'^*_\ell\}$ is  a  partition of $(U'_{S})^\ast$. 

Recall that $C_{m,S}$ is the component of $B_m \setminus \overset{\circ}{A_n}$ such that $C_{m,S} \cap A_n=S$, and that $\phi(U_S^\ast)=(U'_{S})^\ast$.  We  partition $\partial C_{m,S}$ into $S \cup \Gamma_1\cup\cdots\cup\Gamma_\ell$ where each $\Gamma_j$ is a union of $2$-spheres of $\partial B_m$, and label the $\Gamma_j$'s in such a way that  if $\partial \overline{U_{i,m}}\subset \Gamma_j$ for some $i$ and $j$, then $\phi (U^*_{i,m})\subset V'^*_j\,$. It follows that if $V_j$ denotes the union of the $U_{i,m}$'s such that $\partial \overline{U_{i,m}}\subset \Gamma_j$, then $\{V_1^\ast, \ldots,V_\ell^\ast\}$ is a partition of $U_S^\ast$ and $\phi(V_j^\ast)= V'^*_j\,$.

We now consider the closed oriented $3$-manifolds $\widehat C$ and $\widehat C'$ obtained from $C_{m,S}$ and $C'_{S}$ by gluing $3$-balls to their boundary components. They both have a Kneser-Milnor decomposition in elements of $\mathcal X$:
$$\widehat C\simeq \overbrace{P_1\#\dots\# P_1}^{n_1}\# \overbrace{P_2\#\dots\# P_2 }^{n_2}\dots \#\overbrace{P_r \dots \# P_r}^{n_r}\# P_{0}$$
with $n_i=n_i(\widehat C)$, and
$$\widehat C'\simeq \overbrace{P_1\#\dots\# P_1}^{n'_1}\# \overbrace{P_2\#\dots\# P_2 }^{n'_2}\dots \#\overbrace{P_r \dots \# P_r}^{n'_r}\#P_{0} \,.$$
with $n'_i=n_i(\widehat C')$.

From the previous arguments, $n_k\geq n'_k\geq 0$ for each $0<k\le r$. By construction 
$C'_{S}$ is a submanifold of $ \widehat{C'}$ and  $\widehat C' \setminus \overset{\circ}{C'_{S}}$ is a collection of $3$-balls $D',D'_1,\ldots,D'_\ell$ bounded by $S',S'_1,\ldots,S'_\ell$, each sphere separating $ \widehat{C'}$.  Consider now the following connected sum 
$$\widehat{C'}\#N'_1\#\dots\#N'_\ell\,.$$ 
where each $N'_j$ is a closed manifold chosen below and is attached to $D'_j$. We claim that we can choose $N'_j$ so that 
\begin{enumerate}[\indent (a)]
\item $\widehat{C'} \#N'_1\#\dots\#N'_\ell \approx \widehat C$,
\item  For all $0<k\le r$, if $n_k(N'_j)>0$ then $n_k(V'_j)=\infty$.
\end{enumerate}

The manifolds $N'_j$ are defined as connected sums of members of $\cX$. The fact that we can find manifolds $N'_j$ satisfying (a) follows from the above observation that each prime component of the 
Kneser-Milnor decomposition of $\widehat{C'}$ appears at least as many times as in the prime decomposition of $\widehat C$. It then suffices to divide the supernumerary prime components of $\widehat C$ (if any) into the $N'_j$'s. For (b) we first remark that, for $0<k\le r$, 
$$n_k(\widehat{C}) >0 \Longrightarrow n_k(U_S) > 0 \Longrightarrow n_k(U_S)= + \infty \Longrightarrow n_k(U'_{S})= + \infty\,,$$
the last implication following from $(H_n)$,  the previous one from goodness.

Finally, $$n_k(U'_{S'})= + \infty\Longrightarrow \exists j,\quad n_k(V'_j)=+\infty\,.$$  

Summarising, for any prime component $P_k$ of $\widehat C$ there exists $j \in \{1,\ldots,\ell\}$ such that $n_k(V'_j)=+\infty$. For each supernumerary $P_k$, we select $j=j(P_k)$ such that $n_k(V'_j)=+\infty$. We define $N'_j$ to be the connected sum of $S^3$ with the supernumerary manifolds $P_k$ for which $j(P_k)=j$. It follows that the $N'_j$'s satisfy (b).

%Now, for a fixed $j$, let us do the connected sum of all those elements $P\in\mathcal X$ that appear in the above decomposition of $\widehat C$ and not in the decomposition of $\widehat C'$ and that, furthermore satisfy $n_{V'_j}(P) >0$. With the above notations they are among the $(n_i-n'_i)$ (if non zero) $P_i$'s for $1\leq i\leq k$, or among the $P_i$'s for $i\geq k+1$. We denote this connected sum by $N'_j$. If, for some 
%$k$, $n_{V'_k}(P) = 0$ for all $P$'s in the decomposition of $\widehat C$, we set $N'_k=S^3$. We then define $\widetilde C'$ by,
%$$\widetilde C'= \widehat C'\#N'_1\#\dots\#N'_p\,.$$ 
%Here $N'_j$ is connected to $C'$ by removing to it a $3$-ball and gluing the resulting manifold with boundary at the $2$-sphere $S'_j$. From the uniqueness of Kneser-Milnor decomposition, $\widetilde C'$ is isomorphic to $\widehat C$. 

Let us denote by $f:\widehat C\to\widehat{C'}\#N'_1\#\dots\#N'_\ell$ an isomorphism. Let $D, D_1, \dots , D_s \subset \widehat C$ be the compact $3$-balls such that $\widehat C = C_{m,S} \cup D \cup D_1 \cup \cdots \cup D_s$ and $\partial C_{m,S}$ consists of the boundary of these disks, where $S = \partial D$. 

The images $f(D_j)$ are disjoint compact $3$-balls in $\widehat{C'}\#N'_1\#\dots\#N'_\ell$.  These balls can be isotoped anywhere in $\widehat{C'}\#N'_1\#\dots\#N'_\ell$. Hence, up to composing $f$ with finitely many diffeomorphisms we may assume that,
$$f(D)=D'\,, \,(\textrm{thus } f(S)=S')$$
and that 
$$\partial D_i\subset \Gamma_j\Longrightarrow f(D_i)\subset N'_j\,.$$
Then $f(C_{m,S})\supset C'_S$.
We then define,
$$C_S:=f^{-1}(C'_{S})\subset C_{m,S}\,,\quad  S_j:= f^{-1}(S'_j)\subset \partial C_S\subset C_{m,S}.$$

Recall that $S'_j$ separates $M'$, hence also separates $\widehat{C'}\#N'_1\#\dots\#N'_\ell$. It follows that 
 $S_j$ separates $\widehat C$, hence also $C_{m,S}=\widehat C \setminus \Int(D \cup D_1 \cup \ldots \cup D_s)$. As each boundary sphere of $C_{m,S}$  separates $M$, it follows that each $S_j$ separates $M$, which implies  property (1) of Proposition \ref{lem:extension}. To verify property (2)  let us notice that we may deform $f_{\vert C_S}$ near $S$ into a diffeomorphism $\Phi_{n+1,S} : C_S \to C'_{S'}$ that satisfies 
$${\Phi_{n+1,S}}_{\vert S}=\Phi_{n\vert S}:S \longrightarrow S'= \Phi_n(S)\,.$$
Indeed, the space of orientation-preserving diffeomorphisms of $S^2$ is path-connected and this follows, for example, from  \cite{smale:diff}. To check property (3) observe that the connected component $U_j$ of  $M \setminus C_S$ bounded by $S_j$ contains the union of connected components $V_j$ of $M \setminus B_m$ bounded by $\Gamma_j$ and that $U_j \setminus V_j \subset B_m$ is bounded. Therefore ${U_j}^\ast={V_j}^\ast$, from  which we infer that  $\phi({U_j}^\ast)=\phi({V_j}^\ast)={V'_j}^\ast$ as required.
This concludes the proof of Lemma \ref{lem:extension}.
As explained before, extending $\Phi_n$ by $\Phi_{n+1,S}$ in order to define $\Phi_{n+1}$ concludes the proof of the induction.

We only need now to explain why assertion $(H_0)$ is true. Let us set $A'_0:=B'_0$. As $(B_n)$ is an exhaustion of $M$ and $n_k(M)=n_k(M')$, for all $0<k\leq r$, one can find $m_0\in\textbf{M}$ such that, for all $0<k\leq r$ and for all $m\geq m_0$,
$$n_k(B_m)\geq n_k(A'_0)\,.$$
We denote by $S'_1,\dots,S'_l$ the components of $\partial A'_0$ and by $V_j$ the components of $M'\setminus A'_0$ bounded by $S'_j$, for $j=1,\dots,l$. As in Lemma \ref{lem:extension}, there exists $m_1$ such that, for all $m\geq m_1$, if $(U_{i,m})$ denotes the components of $M\setminus B_m$, then, for all $i$ and $j$,
$$\phi (U_{i,m})^*\subset (V'_j)^*\,.$$
Let us fix such $m\geq \sup\{m_0,m_1\}$. We can now argue as in Lemma \ref{lem:extension} with $A'_0$ instead of $C'_S$ to define a submanifold $A_0\subset B_m$ and an isomorphism $\Phi_0 : A_0\longrightarrow A'_0$ such that $S_j:=\Phi_0^{-1}(S'_j)$ separates $M$ and such that  the component $U_j$ of $M\setminus A_0$ bounded by $S_j$ is good and satisfies
$$\Phi(U^*_j)=(V'_j)^*\,.$$
\end{proof}

\section{Final remark}\label{sec:conclusion}

The above results exhibit a subtle interaction between the combinatorial structure of trees and the topological structure of open decomposable $3$-manifolds. It is an interesting question to make this relation more precise and in particular to try to find the more efficient tree representing, up to homeomorphism, a given decomposable $3$-manifold. If the manifold is closed, then a linear, finite tree is a good option but if it is open we do not have, at the moment, any hint regarding an optimal description.

\appendix

\section{Space of ends}\label{sec:ends}

%We are only interested in the case where $X$ is a locally finite graph
% or a manifold, so we give a definition adapted to this case.
%For a more general definition (?) see e.g. \cite{dickman-mccoy}. 

Let $X$ be a connected, locally compact, metrisable, separable topological space. We say that a subset of $X$ is \bydef{bounded} if it is contained in some compact subset. Otherwise it is \bydef{unbounded}. A \emph{pre-end} of $X$ is a decreasing sequence $$e=U_1\supset U_2 \supset \dots\,,$$ of open subsets of $X$ such that:
\begin{enumerate}
\item Every $U_i$ is connected, unbounded, and has compact boundary, and
\item For every bounded $A\subset X$, the set $A\cap U_i$ is eventually empty.
\end{enumerate}
 
Let $e=U_1\supset U_2 \supset \dots$ and $e'=U'_1\supset U'_2 \supset \dots$ be pre-ends of $X$. We say that $e$ is \bydef{contained in} $e'$, and write $e \subset e'$ if for every $n'\in \mathbf{N}$ there exists $n \in \NN$ such that $U_n\subset U'_{n'}$. We define an equivalence relation by letting $e \sim e'$ if $e \subset e'$ and $e'\subset e$. The equivalence class of $e$ is denoted by $e^*$. We call  \bydef{end} of $X$ such an equivalence class, and denote by $E(X)$ the set of ends of $X$.

For every open subset $U\in X$ with compact boundary we set 
$$U^*:=\{ e^* \in E(X) ; e=U_1\supset U_2 \supset \dots\,, \textrm{ and } U_i\subset U\quad \forall i \textrm{ large enough}\}\,.$$

Those sets form a basis for a topology on $E(X)$. Hereafter the set $E(X)$ is endowed with this topology and called the \bydef{space of ends} of $X$. Freudenthal~\cite{freudenthal} proved that this space is compact, metrisable and totally disconnected. Hence it embeds into the Cantor set (see e.g. \cite[7.B and 7.D]{kechkris}). 

The following technical result is well-known. Having being unable to find a reference, we provide a proof.
\begin{lem}
Let $M$ be a connected open manifold and $(K_n)$ be an exhaustion of $M$. Let $\{U_{n,i}\}$ be the collection of all unbounded components with compact boundary of all the sets $M\setminus K_n$. Then the topology on $E(M)$ is generated by the $(U_{i,n}^\ast)$'s.
\end{lem}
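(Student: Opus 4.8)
The goal is to show that the sets $U_{n,i}^\ast$, coming from an arbitrary exhaustion $(K_n)$, generate the same topology on $E(M)$ as the basic open sets $V^\ast$ for all open $V$ with compact boundary. One inclusion is immediate: each $U_{n,i}$ is open with compact boundary, so each $U_{n,i}^\ast$ is already one of the basic open sets, hence open in $E(M)$. The content is the reverse inclusion: given a basic open set $V^\ast$ and an end $e^\ast \in V^\ast$, I must produce some $U_{n,i}$ with $e^\ast \in U_{n,i}^\ast \subset V^\ast$.

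\textbf{Key steps.} First I would fix $e^\ast \in V^\ast$ and choose a representative pre-end $e = W_1 \supset W_2 \supset \cdots$ with $W_j \subset V$ for all $j$ large enough; after discarding finitely many terms I may assume $W_j \subset V$ for all $j$. Next, using that $\partial V$ is compact and that $(K_n)$ is an exhaustion, pick $n$ with $\partial V \subset \operatorname{Int} K_n$. Now take $j$ large enough that $\overline{W_j} \cap K_n = \emptyset$ (possible since $W_j \cap A$ is eventually empty for every bounded $A$, applied to $A = K_n$, and one can also shrink so the closure is disjoint). Then $W_j$ is a connected subset of $M \setminus K_n$ disjoint from $\partial V$; since $W_j \subset V$ and $W_j$ is connected and misses $\partial V$, the whole component $U$ of $M \setminus K_n$ containing $W_j$ also satisfies $U \subset V$ — here one uses that $U$ is connected and disjoint from $\partial V$, together with $U \cap V \neq \emptyset$, to conclude $U \subset V$ (a connected set meeting $V$ but not $\partial V$ lies in $V$). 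This component $U$ is unbounded (it contains the unbounded set $W_j$) and has compact boundary (its boundary lies in $\partial K_n$), so $U = U_{n,i}$ for some $i$. Finally I would check $e^\ast \in U_{n,i}^\ast$: since $W_j \subset U_{n,i}$ for all large $j$, the representative $e$ has its terms eventually inside $U_{n,i}$, which is exactly the defining condition for $e^\ast \in U_{n,i}^\ast$. And $U_{n,i}^\ast \subset V^\ast$ follows from $U_{n,i} \subset V$ directly from the definition of the starred sets. This shows every basic open set is a union of $U_{n,i}^\ast$'s, completing the argument.

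\textbf{Main obstacle.} The one point requiring care is the topological claim that a connected subset of $M \setminus K_n$ that meets $V$ but avoids $\partial V$ must be entirely contained in $V$; this is where the hypothesis that the $W_j$ have compact boundary and that we can arrange $\overline{W_j}$ disjoint from $K_n$ gets used, and one should be slightly careful about closures versus interiors when invoking that $W_j \cap A$ is eventually empty (we want the closure disjoint from $K_n$, which follows by replacing $K_n$ with a slightly larger compact set in the exhaustion, or equivalently passing to $K_{n+1}$). Everything else is bookkeeping with the definitions of pre-end, end, and the basic open sets $U^\ast$.
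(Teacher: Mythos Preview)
Your argument is correct and follows essentially the same route as the paper: both hinge on the observation that if $\partial V \subset \operatorname{Int} K_n$, then the component of $M\setminus K_n$ containing a tail of the pre-end is connected, misses $\partial V$, and meets $V$, hence lies in $V$. The paper packages this by first showing every end has a representative pre-end drawn from $\{U_{n,i}\}$, whereas you go directly to the basis statement; also, your flagged ``main obstacle'' about closures is not actually needed---$W_j\cap K_n=\emptyset$ (guaranteed by the pre-end axiom applied to the bounded set $K_n$) already gives $W_j\subset M\setminus K_n$, and the clopen argument for $U\subset V$ requires nothing more.
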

\begin{proof}
First we show that any end of $M$ can be represented by $U_1 \supset U_2 \supset \ldots$ where $U_j  \in\{U_{n,i}\}$. 

Let $e=U'_1\supset U'_2\supset \dots$ be a pre-end of $M$. By compactness of $\partial U'_j$, one has $\partial U'_j\subset \overset{\circ}{K}_n$ for $n$ large enough. Fix an increasing sequence $n(j)$ such that $\partial U'_j\subset \overset{\circ}{K}_{n(j)}$ for all $j$. As $U'_j$ is unbounded, it meets $M\setminus K_{n(j)}$, hence some component $U_{n(j),i}$. Notice that any component 
 $U_{n(j),i}$ of $M\setminus K_{n(j)}$ which meets $U'_j$ is contained in $U'_j$:  if not, there exists a path $\gamma \subset U_{n(j),i}$ going from $U'_j$ to its complement, hence crossing $\partial U'_j \subset  \overset{\circ}{K}_{n(j)} \subset M \setminus U_{n(j),i}$, a contradiction. Now, for each $j$ choose $m(j)$ large enough so that $K_{n(j)} \cap U'_{m(j)} = \emptyset$. Then select the component $U_{n(j),i(j)}$ of $M \setminus K_{n(j)}$ which meets $U'_{m(j)}$. By connectedness we have 
$$U'_j\supset U_{n(j),i(j)} \supset U'_{m(j)}.$$

Setting $U_j := U_{n(j),i(j)}$, the pre-end $U_1 \supset U_2 \supset \ldots$ is equivalent to $e$. The family $\{U_{i,n}^\ast\}$ is an open cover of $E(X)$. Any two $U,V$ in  $\{U_{n,i}\}$  satisfy $U \cap V=\emptyset$ or $U \subset V$ or $U \supset V$. The same property holds for $U^\ast,V^\ast$ in $\{U_{n,i}^\ast\}$ and implies that  $\{U_{n,i}^\ast\}$ is a basis for some topology. 

 Let $U^*$ be an element of a  basis for the topology of $E(X)$, and let $e^*\in U^*$. We can represent $e^*$ by $e=U_1\supset U_2\dots$ where $U_j\in \{U_{n,i}\}$ and by definition there exists $j$ such that $U\supset U_j$. Then $U^*\supset U_j^*\ni e^*$. This shows that $\{U^*_{n,i}\}$ is a basis for the topology of $E(X)$.
\end{proof}

Let $e$ be a pre-end of $X$ and $U$ be an open subset of $X$. We say that $U$  is a \bydef{neighbourhood} of $e^*$ if $e^*\in U^*$. Thus a pre-end is a non-increasing sequence of neighbourhoods of the associated end.
\end{spacing}

\bibliographystyle{alpha}
\bibliography{biblio}

\end{document}